\documentclass[a4paper,10pt,abstract=true,parskip=true]{scrartcl}

\usepackage[british]{babel}
\usepackage{amsmath,amssymb,wasysym,amsfonts,amsthm,amsthm}
\usepackage{empheq}
\usepackage{bm}

\usepackage{graphicx,epsfig}
\usepackage{subfig}
\usepackage{overpic}

\usepackage{txfonts}
\usepackage[T1]{fontenc}
\usepackage{textcomp}
\usepackage{xcolor}
\usepackage{titletoc}

\usepackage[a4paper,margin=1.5in]{geometry}
\usepackage[colorlinks=true, allcolors=blue!30!black]{hyperref}
\usepackage{url,enumerate,framed,ragged2e}
\usepackage[inline]{enumitem}
\usepackage{authblk}

\newtheorem{thm}{Theorem}

\newtheorem{prop}[thm]{Proposition}

\newtheorem{res}[thm]{Result}
\theoremstyle{remark}
\newtheorem{rmk}[thm]{Remark}
\numberwithin{equation}{section}
\numberwithin{thm}{section}

\newcommand{\eps}{\varepsilon}

\providecommand{\keywords}[1]{\flushleft\textbf{Keywords:~} #1}

\definecolor{matb}{rgb}{0, 0.4470, 0.7410}
\definecolor{darkb}{rgb}{0, 0, 0.8}
\definecolor{darkg}{rgb}{0, 0.65, 0}
\definecolor{green1}{rgb}{0, 0.6390, 0.6390}
\definecolor{green2}{rgb}{0.4540, 0.9210, 0.8540}

 \title{Constructing far-from-equilibrium patterns in a cross-diffusion vegetation-autotoxicity model}
 \author{Annalisa Iuorio\thanks{Department of Engineering, Parthenope University of Naples}, Cinzia Soresina\thanks{Department of Mathematics, University of Trento}, Frits Veerman\thanks{Mathematical Institute, University of Leiden}}
 \date{}

\begin{document}

\maketitle

\begin{abstract} 
\noindent 
Using geometric singular perturbation theory, we construct stationary, periodic, front-type far-from-equilibrium patterns in a cross-diffusion model for plant growth under the influence of toxicity. We show how existing techniques for the analysis of far-from-equilibrium patterns in one spatial dimension can be extended to include cross-diffusion terms and prove the existence of a one-parameter family of these patterns. For a general cross-diffusion model class, we show when front-type patterns may appear depending on the shape of the critical manifold, which is determined by the specific properties of the reaction terms.
\end{abstract}

\keywords{pattern formation, cross-diffusion, autotoxicity, geometric singular perturbation theory, reaction-diffusion systems, far-from-equilibrium patterns}

\justify


\section{Introduction}\label{sec:intro}
    
Vegetation patterns have been an increasingly active research area in the last few decades due to their importance in describing communities' proximity to catastrophic shifts, including desertification \cite{von_Hardenberg_2001, Kefi_2014} or, alternatively, strategies to promote ecosystem resilience \cite{Banerjee_2026, Bastiaansen_2020_2, Eigentler_2018, Rietkerk_2021}. These patterns include fronts, spots, gaps, and labyrinths, as well as vegetation bands \cite{GIV.2025, Klausmeier_1999, Meron_2012, Meron_2007}. Most studies identify biomass-water feedback as the main mechanism behind the formation of vegetation patterns, which in turn explains their abundance in arid and semi-arid environments \cite{Borgogno_2009, Dijkstra_2011, d2006patterns, Filippini2026, Gandhi_2023, Gilad_2007, Martinez_2023, Meron_2004, Rietkerk_2002}. However, in recent years, an additional biological factor has been revealed to play a prominent role both in the emergence and the structure of vegetation patterns, namely toxicity \cite{Bonanomi_2014, Carteni_2012, Consolo_2023, Ruiz_2023}. Focusing on autotoxicity, several biological mechanisms can explain it; here, we consider self-DNA due to litter decomposition as it is the most well-established to this day \cite{Mazzoleni_2014}. This crucial element in plant-soil negative feedback is able to promote biodiversity \cite{Mazzoleni_2010, Mazzoleni_2007} and induce dynamic, asymmetric patterns \cite{IuorioVeerman.2021, Marasco_2020, Marasco_2014}. Moreover, recent works have shown how stable spatial structures can be obtained also when considering only biomass-toxicity interactions -- allowing for a more general description of pattern formation also when water is not a scarce resource -- as long as cross-diffusion terms are considered~\cite{GIS_2026}.
    
Cross-diffusion was originally introduced at a phenomenological level in models such as the classical Shigesada--Kawasaki--Teramoto (SKT) system~\cite{SKT}, proposed to describe spatial segregation in interacting populations competing for the same resource. Only later was it recognised that such terms can be rigorously justified as effective macroscopic descriptions arising from underlying fast-reaction mechanisms~\cite{DesvillettesRDM, Desvillettes2015, IMN}, obtained by eliminating rapidly equilibrating variables and dichotomising states.
This perspective has since been successfully applied in a variety of contexts, including predator–prey systems~\cite{ConfortoDesSoresina2018, DesvillettesFiorentinoMautone, DesvillettesSoresina, IIR}, cockroach clustering~\cite{cockroaches}, starvation-driven diffusion~\cite{Brocchieri2025, BrocchieriDesvillettes}, models of the Neolithic transition~\cite{Elias2018}, metal corrosion processes~\cite{Corrosione}, and biomass–toxicity interactions~\cite{GIS_2026}.
Although this mechanism does not universally lead to pattern formation, in certain cases (most notably in the SKT model, as well as the metal surface corrosion and plant-autotoxicity models), the induced cross-diffusion term enables the emergence of stable spatial patterns even in the absence of a classical activator–inhibitor reaction scheme~\cite{Breden2019}. In particular, the structure of the Jacobian matrix and its interplay with the form of the cross-diffusion terms and the corresponding fast-reaction system play a crucial role in determining the onset of such instabilities~\cite{BrocchieriSoresina}.

A common approach in the investigation of pattern-forming systems is based on the combination of linear stability analysis, weakly nonlinear analysis, continuation, and numerical simulations. In this paper, the analysis will be based on \emph{geometric singular perturbation theory} (GSPT), see e.g.~\cite{Hek2010, Kuehn.2015}. The analytical tools provided by GSPT have been instrumental in the constructive existence and stability of far-from-equilibrium multiple-scale patterns in classes of reaction-diffusion systems \cite{DoelmanVeerman.2015, dRDR.2016}, including those describing the dynamics of biomass and autotoxicity effects \cite{Carter_etal.2024, GIV.2025, IuorioVeerman.2021}. GSPT uses the presence of a small parameter (usually denoted by $0<\eps \ll 1$) to distinguish multiple spatial scales in solutions to pattern-forming systems. These multiple spatial scales allow the decomposition of the underlying system into so-called ``fast'' and ``slow'' subsystems, which are of lower dimension than the full system, and are therefore more amenable to analysis. These subsystems are studied using dynamical systems techniques, identifying orbit structures and conserved quantities. The results of the analysis of these subsystems can then be combined to constructively prove the existence of pattern-like solutions to the full system. We note that this analysis is fundamentally different from a Turing analysis, which focuses on the initial stages of the \emph{formation} process of \emph{close-to-equilibrium} patterns. In contrast, GSPT gives access to the \emph{constructive existence} (and stability) of \emph{far-from-equilibrium} patterns; indeed, the nonlinear nature of the underlying system is crucial in the analysis, and will determine which multi-scale patterns will exist. As such, geometric singular perturbation theory has proven to be particularly useful to analyse the existence of front-type or spike-type patterns, such as those shown in Fig.~\ref{fig:numsim_horns}, in reaction-diffusion systems without cross-diffusion \cite{Brown_etal.2023, Carter_etal.2024, CRS.2021, CarterScheel.2018, DvHK.2008, DoelmanVeerman.2015, GIV.2025, Hek2010, IuorioVeerman.2021, NDV.2026, Rademacher.2013, dRDR.2016}.

\begin{figure}
    \centering
    \begin{overpic}[width=13cm]{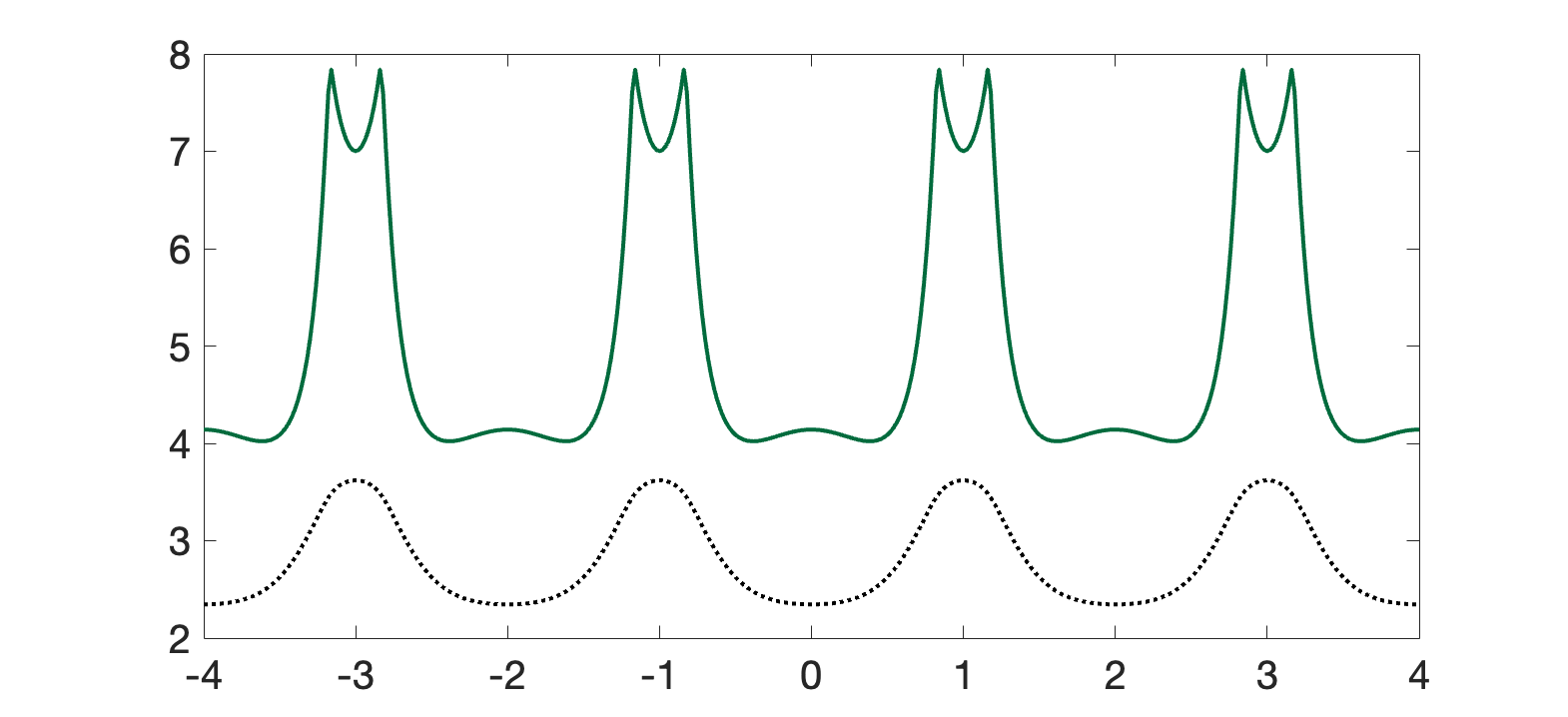}
        \put(0,30){\textcolor{green!50!black}{$R$}, $T$}
        \put(50,-2){$x$}
	  \end{overpic}
    \vspace{.5cm}
    \caption{Double-peaked stationary pattern in 1D (green solid line: $R$-profile (roots biomass), black dotted line: $T$-profile (toxicity concentration) obtained by continuation in \texttt{pde2path} and numerical simulations of the cross-diffusion system for plant-biomass and toxicity studied in~\cite{GIS_2026}.}
    \label{fig:numsim_horns}
\end{figure}

The goal of this paper is to use GSPT techniques to study the shape and dynamics of far-from-equilibrium patterns in models containing cross-diffusion, within the theoretical framework developed in \cite{DoelmanVeerman.2015}. In particular, we focus on the double-peaked stationary patterns in the 1D case observed numerically in \cite{GIS_2026}, reproduced here in Fig.~\ref{fig:numsim_horns}. We first construct the far-from-equilibrium patterns reported in \cite{GIS_2026}, obtained both by direct numerical simulations and continuation with \texttt{pde2path}, for the particular case $\gamma=s=0$ with linear (saturated) transition rates; see Sections~\ref{sec:model_specific} and \ref{sec:patterns_specific}. This detailed analysis of the original plant-toxicity model serves both to illustrate our analytical approach and to highlight the main arguments in a concrete setting.

This naturally leads to the following ecologically relevant question: are the double-peaked patterns observed in \cite{GIS_2026} merely a consequence of the particular choice of transition rates, or do they persist under more general (albeit mathematically different) transition functions?

We then address this question by extending the analysis to a general class of cross-diffusion plant-toxicity models; see Sections~\ref{sec:model_class} and \ref{sec:patterns_general}. In particular, we identify classes of transition rates for the fast-reaction system that either allow or preclude the existence of these front-type far-from-equilibrium patterns. Our main findings are summarised in Results~\ref{res:patterns_general_necessaryconditions} and \ref{res:patterns_generalmodel}.

Unlike the detailed treatment of the original model, our analysis of the general model class \eqref{eq:model_class} focuses on the key structural properties of the transition rates $f$ and $g$ that determine whether the analytical approach developed in Section~\ref{sec:patterns_specific} can be carried over. In particular, the geometry of the critical manifold (see, e.g., Eq.~\eqref{eq:C0} and Fig.~\ref{fig:CM_cases}) emerges as the fundamental ingredient governing the existence of these patterns.

\section{The plant--autotoxicity model}\label{sec:model} 
In this section, we introduce the models that will be analysed throughout this work. We begin with the cross-diffusion model for plant biomass and autotoxicity proposed in~\cite{GIS_2026}, which represents, to the best of our knowledge, the first formulation incorporating cross-diffusion effects in this context. We then turn to a more general framework proposed in~\cite{Morgan_etal.2025}, in which the nonlinear interaction terms are not specified explicitly, but are instead defined through a set of structural assumptions. Both models will be studied in the subsequent sections using GSPT techniques.

\subsection{Specific plant-toxicity model} \label{sec:model_specific} 
In \cite{GIS_2026}, the authors exploited the fast-reaction limit to derive a cross-diffusion system describing the dynamics of plant biomass and autotoxicity concentration, with propagation reduction induced by the toxicity.

We consider a bounded domain $\Omega \subset \mathbb{R}^n$ and denote by $R=R(x,t)$ and $T=T(x,t), \, x\in \Omega, \, t\geq 0$, the plant biomass and autotoxicity concentration, respectively. The cross-diffusion model writes
\begin{subequations}\label{eq:model_specific}
\begin{align}
 \partial_t R - \Delta \left[(d_R - \alpha\,\phi(T))R\right] &= (g - \gamma\,\phi(T))\,R\left(1-\frac{R}{\hat{R}}\right) - (d+s\,\phi(T))\,R, \label{eq:model_specific_R}\\
 \partial_t T - d_T \Delta T &= c(d+s\,\phi(T))\,R-k\,T\label{eq:model_specific_T}.
\end{align}
\end{subequations}
Here, $d_R$ and $d_T$ denote the diffusion coefficients of $R$ and $T$, respectively, while $\alpha$ is the cross-diffusion coefficient and $\phi(T)$ is the cross-diffusion function. It is assumed that $d_R>\alpha$ and the function $\phi(T)$ is determined by the transition rates of the fast-reaction system~\cite{GIS_2026} and writes
\begin{equation}
\phi(T) = \left\{\begin{array}{rcl} T/T_c, & \text{if} & T<T_c \ , \\ 1, & \text{if} & T>T_c\ .\end{array}\right.  \label{eq:pwphi}    
\end{equation}
In the reaction part, root dynamics is described with a logistic growth term and a mortality term, both toxicity-dependent. Parameter $g$ denotes the roots' maximum growth rate, the parameter $\gamma$ is the growth-inhibition rate induced by toxicity, $\hat{R}$ is the carrying capacity of the (logistic) plant growth, $d$ is the natural death rate, and $s$ is the extra-mortality rate induced by toxicity. In the toxicity equation, the first term describes the toxicity production due to decomposed biomass with a conversion factor $c$, while the second term accounts for toxicity dilution with a constant rate $k$.

While the model without cross-diffusion terms (namely $\alpha=0$) does not present Turing instability, it is possible to observe stable Turing patterns even when $\gamma = s = 0$, namely driven by cross-diffusion only \cite{GIS_2026}.


\subsection{General plant-toxicity model class}\label{sec:model_class}
A natural generalisation of the former model was introduced and studied in~\cite{Morgan_etal.2025}. In this framework, the specific functional forms of the nonlinearities are not fixed a priori, but are instead characterised by a set of qualitative assumptions. Also in this model, we denote by $R=R(x,t)$ and $T=T(x,t), \, x\in \Omega, \, t\geq 0$ the plant biomass and autotoxicity concentration, respectively. Using a fast-reaction limit, the authors derived the following general model class describing the interaction between plant biomass and toxicity:
\begin{subequations}\label{eq:model_class}
    \begin{align}
    \partial_t R &= \Delta \left(d_{R_1}R - (d_{R_1}-d_{R_2}) \xi_2(R,T)\right) + h(R,T),\\
    \partial_t T &= d_T \Delta T + \mu\,\left(\eta_1\,\xi_1(R,T) + \eta_2\,\xi_2(R,T)\right) - k\,T,
    \end{align}
\end{subequations}
where $d_{R_1}>d_{R_2}$ and the functions $h, \; \xi_1$, and $\xi_2$ are given by
\begin{subequations}\label{eq:model_class_nonlinearities}
    \begin{align}
        h(R,T) &= \sum_{i=1}^2 \gamma_i\,\xi_i(R,T) \left(\hat{R}-R\right) - \eta_i\,\xi_i(R,T),\\
        \xi_1(R,T) &= \frac{g(T)}{f(T)+g(T)}\,R,\\
        \xi_2(R,T) &= \frac{f(T)}{f(T)+g(T)}\,R.
    \end{align}
\end{subequations}
In the fast-reaction model, the functions $f$ and $g$ model the (nonnegative) switching rates (also called transition rates) between healthy roots and exposed roots depending on the toxicity concentration $T$. Therefore, we take $f,\, g : [0,\infty) \to [0,\infty)$, and assume that $f$ is monotonically increasing, while $g$ is monotonically decreasing. Possible expressions for $f$ and $g$ are \cite{Morgan_etal.2025}:
\begin{itemize}
    \item Power functions:  \begin{subequations}\label{eq:fg_choices_powerfct}
        \begin{align}
            f(T) &= T^p,\\
            g(T) &= \frac{1}{(1+T)^q},
        \end{align}
    \end{subequations}
    with exponents $p,\, q > 0$.
    \item Holling-type II functions:
    \begin{subequations}\label{eq:fg_choices_HollingII}
        \begin{align}
         f(T) &= \frac{a_1 T + b_1}{c_1 T + d_1},\\
         g(T) &= \frac{a_2 T + b_2}{c_2 T + d_2},
        \end{align}
    \end{subequations}
    with $a_i,\, b_i,\, c_i,\, d_i \geq 0$ such that $a_1 d_1 - b_1 c_1 \geq 0$, to ensure that $f$ is increasing, and $a_2d_2-b_2 c_2 \leq 0$, to ensure that $g$ is decreasing. Without loss of generality, we assume $d_i>0$. Hence, we can write
    \begin{subequations}\label{eq:fg_choices_HollingII_rescaled}
        \begin{align}
         f(T) &= f_0 + \frac{d_f T}{c_f T+1},\\
         g(T) &= g_0 - \frac{d_g T}{c_g T+1},
        \end{align}
    \end{subequations}
    with $f_0,\, g_0, \, d_f, \, d_g, \, c_f \geq 0$ and $c_g > 0$.
    \item Saturation functions:
    \begin{subequations}\label{eq:fg_choices_sattrans}
        \begin{align}
            f(T) &= T,\\
            g(T) &= \left\{ \begin{array}{rcl} \hat{T} - T & \text{if} & T < \hat{T},\\
            0 & \text{if} &T > \hat{T}
            \end{array} \right.~\, ,
        \end{align}
    \end{subequations}
    where $\hat{T}$ is a parameter representing a threshold value for the toxicity concentration. Possibly, $g$ should be smoothened to a $C^2$ approximation.
\end{itemize}
In general, we assume that both $f$ and $g$ are piecewise $C^1$.

\section{Periodic multiscale patterns in the specific model}\label{sec:patterns_specific}

To recast the model in the GSPT framework, we first rescale model~\eqref{eq:model_specific} by considering the adimensional quantities
\begin{equation}
    \tilde{R} = \frac{R}{\hat{R}}, \quad \tilde{T} = \frac{T}{T_c}, \quad \tilde{t} = k t, \quad \tilde{x} = x\sqrt{\frac{k}{d_R}},
\end{equation}
and by redefining the parameters
\begin{equation}
    \tilde{g} = \frac{g}{k}, \quad \tilde{s} = \frac{s}{k}, \quad \tilde{d} = \frac{d}{k}, \quad \tilde{\gamma} = \frac{\gamma}{k}, \quad h = \frac{c \hat{R}}{T_c}, \quad \rho 
    = \frac{\alpha}{d_R},  \quad \eps^2 = \frac{d_T}{d_R}.
\end{equation}
We obtain the adimensionalised system 
(where we drop tildes to enhance readability)
\begin{subequations}\label{eq:model_specific_rescaled}
\begin{align}
 \partial_t R - \Delta \left[(1 - \rho\,\phi(T))R\right] &= (g - \gamma\,\phi(T))\,R\left(1-R\right) - (d+s\,\phi(T))\,R, \label{eq:model_specific_R_rescaled}\\
 \partial_t T - \eps^2 \Delta T &= h(d+s\,\phi(T))\,R-T\label{eq:model_specific_T_rescaled},
\end{align}
\end{subequations}
where the cross-diffusion function is given by
\begin{equation}\label{eq:def_phiT}
        \phi(T) = \left\{\begin{array}{rcl} T & \text{if} & T< 1,\\ 1 & \text{if} & T>1.\end{array}\right. 
\end{equation}
The cross-diffusion parameter $\rho$ satisfies $0<\rho<1$, commensurate with the modelling assumption of the fast-reaction system that the diffusion coefficient for healthy roots is larger than the diffusion coefficient for roots that are exposed to toxicity \cite{GIS_2026}; for the general model \eqref{eq:model_class}, this is equivalent to the assumption $d_{R_1}>d_{R_2}$. All other parameters in \eqref{eq:model_specific_rescaled} are positive without a priori bounds.


We consider model \eqref{eq:model_specific_rescaled} in one spatial dimension; furthermore, we take $x \in \mathbb{R}$ unbounded. Our aim is to find stationary solutions to \eqref{eq:model_specific_rescaled} whose amplitude is bounded in $x$, such as fronts, pulses, or periodic concatenations of these. 
We consider $(P(x),T(x))$ and define $\frac{\text{d} P}{\text{d}x} := S$, $\eps \frac{\text{d} T}{\text{d} x} := U$. Using the shorthand notation $(\cdot)' = \frac{\text{d}}{\text{d} x}$, we can represent these stationary, bounded solutions as bounded orbits of the following dynamical system:
\begin{subequations}\label{eq:model_specific_dynsys}
    \begin{align}
        P' &= S,\\
        S' &= \left(d+s\,\phi(T)\right)\,R - \left(g - \gamma\,\phi(T)\right)\,R\left(1-R\right),\\
        \eps T' &= U,\\
        \eps U' &= T - h\left(d + s\,\phi(T)\right)\,R,\\
        \left(1 - \rho\,\phi(T)\right)R &= P.~\label{eq:model_specific_dynsys_constraint}
    \end{align}
\end{subequations}
The algebraic equation \eqref{eq:model_specific_dynsys_constraint} constrains the dynamics of \eqref{eq:model_specific_dynsys} to a 4-dimensional submanifold of $(P,S,T,U,R)$ phase space. The most straightforward way to proceed is to express $R$ in terms of $P$ and $T$:
\begin{equation}\label{eq:R_in_PT}
    R = \frac{P}{1 - \rho\,\phi(T)}.
\end{equation}
Note that the singularity that occurs when $\rho \phi(T) = 1$ signifies the transition from ``normal'' diffusion to anti-diffusion in \eqref{eq:model_specific_R}. Using the substitution \eqref{eq:R_in_PT} in \eqref{eq:model_specific_dynsys} yields:
\begin{subequations}\label{eq:model_specific_dynsys_inP}
    \begin{align}
        P' &= S,\\
        S' &= \frac{P}{1 - \rho\,\phi(T)}\left[d+s\,\phi(T) - \left(g - \gamma\,\phi(T)\right)\left(1-\frac{P}{1 - \rho\,\phi(T)}\right)\right],\\
        \eps T' &= U,\\
        \eps U' &= T - h\,P\frac{d + s\,\phi(T)}{1 - \rho\,\phi(T)},
    \end{align}
\end{subequations}
In order to facilitate the presentation of the upcoming analysis, we focus on the simplified scenario $\gamma=s=0$. We will mention where and how the more general choice $\gamma \neq 0$ and $s \neq 0$ would substantially change the results we present here.
Furthermore, we include the additional assumption used in \cite{GIS_2026} that $h=d^{-1}$ (imposing a specific expression on $T_c$ in terms of the other parameters). In this case, system \eqref{eq:model_specific_dynsys_inP} becomes
\begin{subequations}\label{eq:gammas0_slow}
    \begin{align}
        P' &= S,\\
        S' &= \frac{P}{1 - \rho\,\phi(T)}\left[d - g \left(1-\frac{P}{1 - \rho\,\phi(T)}\right)\right],\\
        \eps T' &= U,\\
        \eps U' &= T - \frac{P}{1 - \rho\,\phi(T)}.
    \end{align}
\end{subequations}
In the following subsections, we will analyse system \eqref{eq:gammas0_slow} in the regime where the toxicity diffusion rate $d_T$ is small in comparison to the biomass diffusion rate $d_R$, corresponding to $0< \eps \ll 1$. We will show the existence of far-from-equilibrium, multiple-scale periodic patterns using methods from Geometric Singular Perturbation Theory.

\subsection{Critical manifold}

The singular limit $\eps \to 0$ in \eqref{eq:gammas0_slow} yields two algebraic equations that determine the critical manifold, which is given by
\begin{equation}\label{eq:C0}
    \mathcal{C}_0 := \left\{U = 0\;, T = \frac{P}{1 - \rho\,\phi(T)}\right\} = \left\{U = 0\;, P = T(1 - \rho\,\phi(T))\right\}.
\end{equation}
The critical manifold $\mathcal{C}_0$ is naturally divided into different pieces due to the piecewise smooth nature of $\phi(T)$. Furthermore, the geometry of $\mathcal{C}_0$ can be further specified based on the value of $\rho$. We define
\begin{equation}\label{eq:C0_det}
\begin{aligned}
    \mathcal{C}_0^l &= \left\{U = 0\;, P = T(1 - \rho\,T)\;, 0 \leq T < \text{min}\left(\frac{1}{2 \rho},1\right)\right\}, \\
    \mathcal{C}_0^m &= \left\{U = 0\;, P = T(1 - \rho\,T)\;, \text{min}\left(\frac{1}{2 \rho},1\right) \leq T < 1\right\}, \\
    \mathcal{C}_0^r &= \left\{U = 0\;, P = T(1 - \rho)\;, T \geq 1 \right\},
\end{aligned}
\end{equation}
so that $\mathcal{C}_0 = \mathcal{C}_0^l \cup \mathcal{C}_0^m \cup \mathcal{C}_0^r$. This division of $\mathcal{C}_0$ leads to a natural case distinction based on the value of $\rho$, see also Fig.~\ref{fig:CM_cases}.

\begin{figure}
    \centering
    \begin{overpic}[width=0.7\linewidth]{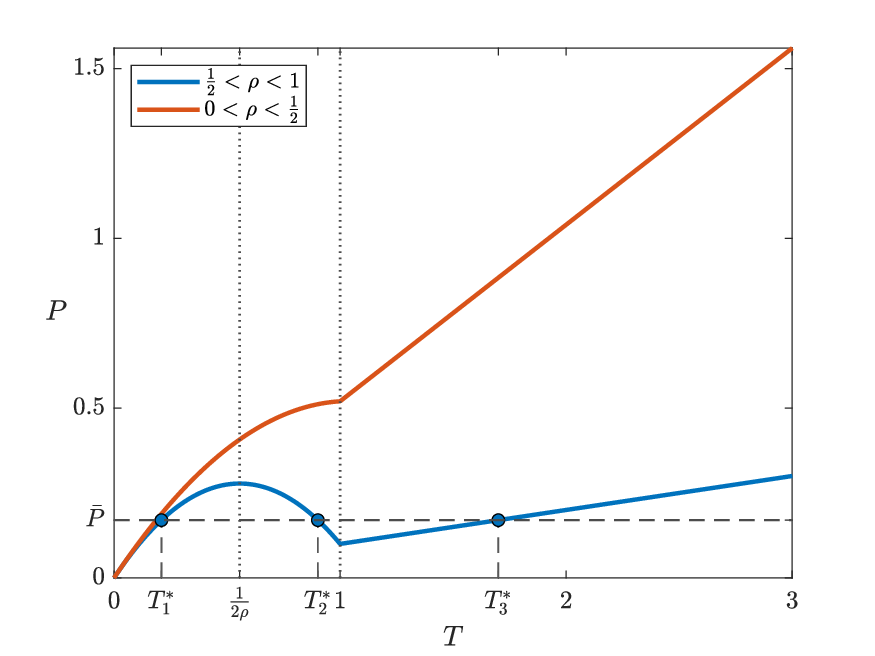}
        \put(17,20){\textcolor{matb}{\footnotesize{$\mathcal{C}_0^{l}$}}}
        \put(31,20){\textcolor{matb}{\footnotesize{$\mathcal{C}_0^{m}$}}}
        \put(70,20){\textcolor{matb}{\footnotesize{$\mathcal{C}_0^{r}$}}}
    \end{overpic}
    \caption{The projection of the critical manifold $\mathcal{C}_0$ \eqref{eq:C0} onto $(T,P)$-space for $0<\rho<1/2$ (orange) and for $1/2 < \rho < 1$ (blue). In the latter case, there is a bistability regime for the layer problem \eqref{eq:gammas0_layp}. The vertical dotted lines indicate the location of $T=1/(2\rho)$ and $T=1$, respectively, dividing the critical manifold in the three pieces $\mathcal{C}_0^{l}$ (left), $\mathcal{C}_0^{m}$ (middle), and $\mathcal{C}_0^{r}$ (right). The horizontal dashed line at a fixed value of $P=\bar{P}$ shows the existence of three intersection points (dots) for $T=T_i^\ast$, $i = 1,2,3$, as defined in Eq.~\eqref{eq:equilCM}.}
    \label{fig:CM_cases}
\end{figure}

\begin{figure}
    \centering
    \includegraphics[width=0.7\linewidth]{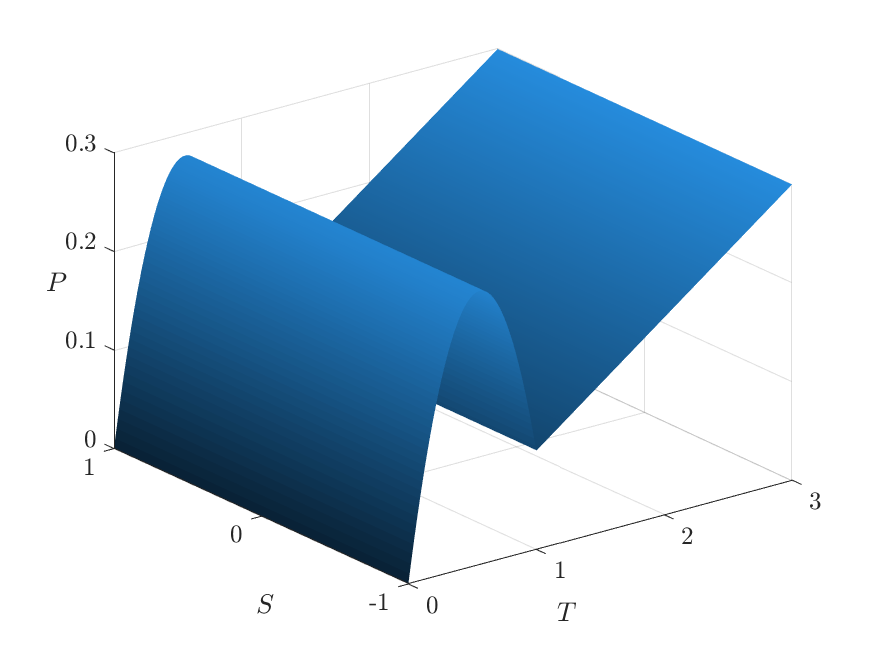}
    \caption{Critical manifold $\mathcal{C}_0$ \eqref{eq:C0} in $(T,S,P)$-space for $1/2 < \rho < 1$.}
    \label{fig:CM_3dim}
\end{figure}

We observe that $P$ is always an increasing function of $T$ on $\mathcal{C}_0^r$ and on $\mathcal{C}_0^l$. When $0 < \rho < 1/2$, we see that $\mathcal{C}_0^m = \emptyset$, so $P$ is increasing in $T$ on the entirety of $\mathcal{C}_0$.
On the other hand, for $1/2 < \rho < 1$ we have that $P$ is a decreasing function of $T$ on $\mathcal{C}_0^m \neq \emptyset$, and the manifolds $\mathcal{C}_0^l$, $\mathcal{C}_0^m$ meet at the fold line $\left\{(P = 1/(4\rho), T = 1/(2\rho), U = 0\right\}$ determined by the local maximum of the graph of $P = T(1-\rho T)$. The segments $\mathcal{C}_0^l$ and $\mathcal{C}_0^r$ (for $0 < \rho < 1/2$) or $\mathcal{C}_0^m$ and $\mathcal{C}_0^r$ (for $1/2 < \rho < 1$) meet at the line $\left\{ P = 1-\rho, T = 1, U = 0\right\}$, determined by the non-smooth transition of $\phi(T)$ at $T=1$ \eqref{eq:def_phiT}, see again Fig.~\ref{fig:CM_cases}. This non-smoothness of $\mathcal{C}_0$ will require further care in the following analysis.\\
As an aside, we note that the qualitative geometric properties of $\mathcal{C}_0$ do not change when $\gamma \neq 0$, $s \neq 0$  and $h \neq d^{-1}$ are taken in \eqref{eq:model_specific_dynsys_inP}; only the position of the fold $T = 1/(2\rho)$ will now depend on the fraction $s/d$.

\subsection{Layer problem}
The fast formulation of system \eqref{eq:model_specific_dynsys_inP} is found by introducing the rescaled spatial variable $\xi = x/\eps$, yielding
\begin{subequations}\label{eq:gammas0_fast}
    \begin{align}
        \dot{P} &= \eps\,S,\\
        \dot{S} &= \frac{\eps P}{1 - \rho\,\phi(T)}\left[d- g\left(1-\frac{P}{1 - \rho\,\phi(T)}\right)\right],\\
        \dot{T} &= U,\\
        \dot{U} &= T - \frac{P}{1 - \rho\,\phi(T)}.
    \end{align}
\end{subequations}
Subsequently taking the limit $\eps \to 0$, we obtain the layer problem
\begin{subequations}\label{eq:gammas0_layp}
    \begin{align}
        \dot{T} &= U,\\
        \dot{U} &= T - \frac{P}{1 - \rho\,\phi(T)},
    \end{align}
\end{subequations}
for constant values of the variables $P$, $S$.~\\
The equilibria $(T^\ast,U^\ast)$ of the layer problem \eqref{eq:gammas0_layp} are determined by the same equations that define the critical manifold $\mathcal{C}_0$ \eqref{eq:C0}. Hence, for fixed values $P = \bar{P}$, $S = \bar{S}$, we find $U^\ast=0$
and up to three possible $T^\ast$ values, namely
\begin{equation} \label{eq:equilCM}
    T_1^\ast = \frac{1-\sqrt{1-4\rho \bar{P}}}{2 \rho}, \qquad T_2^\ast = \frac{1+\sqrt{1-4\rho \bar{P}}}{2 \rho}, \qquad T_3^\ast = \frac{\bar{P}}{1-\rho}.
\end{equation}
In particular, if $0 < \rho < 1/2$ the only admissible value is $T_3^\ast$, whereas for $1/2 < \rho < 1$ we have the following scenarios:
\begin{equation}
\begin{cases}
    T_1^\ast & \text{ if } 0 < \bar{P} < 1-\rho, \\
    T_1^\ast, T_2^\ast & \text{ if } \bar{P} = 1-\rho, \\
    T_1^\ast, T_2^\ast, T_3^\ast & \text{ if } 1-\rho < \bar{P} < 1/(4\rho), \\
    T_2^\ast, T_3^\ast & \text{ if } \bar{P} = 1/(4\rho), \\
    T_3^\ast & \text{ if } \bar{P} > 1/(4\rho).
\end{cases}
\end{equation}
Note that these scenarios directly follow from intersecting the graph of $P = T(1-\rho \phi(T))$ with the horizontal line $P = \bar{P}$, cf. Fig.~\ref{fig:CM_cases}.

\subsubsection{Heteroclinic connections in the layer problem}
In phase space, the type of patterns observed in numerical simulations (see Fig.~\ref{fig:numsim_horns}) corresponds to periodic orbits consisting of slow orbits on $\mathcal{C}_0^l$ and $\mathcal{C}_0^r$ matched with fast (heteroclinic) connections between these manifolds. Therefore, in our subsequent analysis we focus on the case $1/2 < \rho < 1$ and in the layer problem \eqref{eq:gammas0_fast} we assume $1-\rho < \bar{P} < 1/(4\rho)$ in order to construct double heteroclinic connections between $(T_1^\ast,0)$ and $(T_3^\ast,0)$. This in turn implies that the range of admissible $T$-values in the analysis of the layer problem is $$T \in \left(\frac{1-\rho}{\rho}, \frac{1}{4\rho(1-\rho)}\right),$$ which by construction contains the equilibria $T_{1,2,3}^\ast$.~\\
Under these assumptions, we consistently have that the manifolds $\mathcal{C}_0^l$ and $\mathcal{C}_0^r$ are normally hyperbolic \cite{Kuehn.2015}: at the equilibria $(T_1^\ast, 0 )$ respectively $(T_3^\ast, 0 )$, the Jacobian matrix associated to eq.~\eqref{eq:gammas0_layp} admits the eigenvalues
\[
\lambda_1^\pm = \pm \sqrt{2}\frac{\sqrt{1-4 \rho \bar{P} +\sqrt{1-4 \rho \bar{P} }}}{1+\sqrt{1-4 \rho \bar{P} }},
\qquad \lambda_3^\pm = \pm 1,
\]\\
which are real and nonzero for $1-\rho < \bar{P} < 1/(4\rho)$.

We now prove the following result.
\begin{prop} \label{prop:hetconn}
    For any $1/2 < \rho < 1$ and any $S=\bar{S} \in \mathbb{R}$, there exists a unique $P^\ast(\rho)$ with $1-\rho < P^\ast(\rho) < 1/(4\rho)$ such that system \eqref{eq:gammas0_layp} admits a double heteroclinic connection $\varphi_\mathrm{h}^\pm(\xi) = \left(T_\mathrm{h}(\xi),\pm U_\mathrm{h}(\xi)\right)$ between $Q_1=(T_1^\ast, 0 )$ and $Q_3=(T_3^\ast, 0 )$. In other words, for $P = P^\ast(\rho)$, the intersection $\mathcal{W}^{u}(Q_1) \cap \mathcal{W}^{s}(Q_3)$ (resp.~$\mathcal{W}^{s}(Q_1) \cap \mathcal{W}^{u}(Q_3)$) is transversal.
\end{prop}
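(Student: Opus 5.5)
The plan is to use that the layer problem \eqref{eq:gammas0_layp} is a planar Hamiltonian system for each fixed $\bar P$ (and $\bar S$, which does not enter the equations at all). Write $\dot T = U$, $\dot U = N(T;\bar P) := T - \bar P/(1-\rho\,\phi(T))$. Since $\phi$ is continuous and piecewise smooth, $N$ is continuous in $T$. Hence
\[
H(T,U;\bar P) = \tfrac12 U^2 - F(T;\bar P), \qquad F(T;\bar P) = \int_{T_1^\ast}^{T} N(\tau;\bar P)\,\mathrm{d}\tau ,
\]
is a $C^1$ first integral, even across the non-smooth line $T=1$. On the admissible range, $N>0$ exactly where $T(1-\rho\phi(T))>\bar P$, i.e. on $(T_1^\ast,T_2^\ast)$, and $N<0$ on $(T_2^\ast,T_3^\ast)$. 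So $F(\cdot;\bar P)$ increases from $F(T_1^\ast)=0$ up to $T_2^\ast$ and then decreases up to $T_3^\ast$.

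A heteroclinic connection between the saddles $Q_1$ and $Q_3$ can only exist inside a single level set of $H$. So the key condition is the Maxwell-type equal-area condition
\[
G(\bar P) := F(T_3^\ast;\bar P) = \int_{T_1^\ast(\bar P)}^{T_3^\ast(\bar P)} \Big(T - \frac{\bar P}{1-\rho\,\phi(T)}\Big)\,\mathrm{d}T = 0 .
\]
Conversely, if $G(\bar P)=0$, then $F>0$ on $(T_1^\ast,T_3^\ast)$ by the monotonicity just described. The level set $\{H=0\}$ then contains the two graphs $U=\pm\sqrt{2F(T)}$ over $(T_1^\ast,T_3^\ast)$. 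These graphs are exactly the orbits $\varphi_\mathrm{h}^\pm$, with $T_\mathrm{h}$ monotone, and they lie in $\mathcal{W}^u(Q_1)\cap\mathcal{W}^s(Q_3)$ and $\mathcal{W}^s(Q_1)\cap\mathcal{W}^u(Q_3)$ respectively. The $\pm$ symmetry is just the reversibility $(\xi,U)\mapsto(-\xi,-U)$.

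Existence and uniqueness of $P^\ast(\rho)$ then reduces to studying $G$ on $(1-\rho,1/(4\rho))$.

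\emph{Monotonicity.} Differentiate $G$ in $\bar P$. The boundary terms vanish because $N(T_i^\ast;\bar P)=0$, so
\[
G'(\bar P) = -\int_{T_1^\ast}^{T_3^\ast}\frac{\mathrm{d}T}{1-\rho\,\phi(T)} < 0 ,
\]
using $0<\rho<1$. Thus $G$ is strictly decreasing.

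\emph{Sign at the endpoints.} I would pass to the limits, where the equilibria are continuous in $\bar P$.
\begin{itemize}
\item As $\bar P\downarrow 1-\rho$, we have $T_2^\ast,T_3^\ast\to 1$, so the integration interval degenerates to $(T_1^\ast,T_2^\ast)$, where $N>0$. Hence $\lim G>0$.
\item As $\bar P\uparrow 1/(4\rho)$, we have $T_1^\ast,T_2^\ast\to 1/(2\rho)$, leaving only $(T_2^\ast,T_3^\ast)$, where $N<0$. Hence $\lim G<0$.
\end{itemize}
The intermediate value theorem then gives a unique zero $P^\ast(\rho)$ in the open interval, and it is independent of $\bar S$.

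Transversality is the most delicate point. In the planar system with $\bar P$ frozen, the one-dimensional invariant manifolds coincide along the connection, so the meaningful statement is transversality in the family parametrised by $P$, i.e. in the extended $(T,U,P)$ space. I would argue via a Melnikov-type computation. Measure the splitting of $\mathcal{W}^u(Q_1)$ and $\mathcal{W}^s(Q_3)$ along a section $\{T=T_2^\ast\}$ by the difference of their $H$-values, $H|_{\mathcal{W}^u(Q_1)}-H|_{\mathcal{W}^s(Q_3)} = F(T_3^\ast;\bar P)-F(T_1^\ast;\bar P) = G(\bar P)$. Then $G'(P^\ast)\neq0$ shows that the manifolds cross with nonzero speed as $P$ passes $P^\ast$, which is exactly transversality of the two-dimensional manifolds in the extended space. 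The main technical care needed here is that $\phi$ is only Lipschitz at $T=1\in(T_2^\ast,T_3^\ast)$. Since $H$ is still $C^1$ and the saddles lie away from $T=1$, both the differentiation under the integral and the Melnikov argument go through piecewise.
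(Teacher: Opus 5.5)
Your proof is correct, and it reaches the result by a different organisation than the paper's.

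\textbf{The paper's route.} The paper integrates the first integral separately on each side of the kink. This gives the explicit graph $U_1^+(T)$ of $\mathcal{W}^u(Q_1)$ for $T<1$ and the straight lines through $Q_3$ for $T>1$, which are matched at $T=1$. It shows that $U_1^+(1)$ decreases in $\bar P$, by the same cancellation of boundary terms you exploit. It notes that the $Q_3$-branch value $\bar P/(1-\rho)-1$ grows linearly from $0$. It then gets a sign change by evaluating a transcendental function $u_1^+$ at $\bar P=1-\rho$ and $\bar P=1/(4\rho)$.

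\textbf{Relation to your $G$.} Your Maxwell function encodes exactly the same condition. Since $N=T-T_3^\ast$ for $T>1$, one has $\int_1^{T_3^\ast}N\,\mathrm{d}T=-(T_3^\ast-1)^2/2$, and hence $2G(\bar P)=U_1^+(1)^2-(T_3^\ast-1)^2$.

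\textbf{What your route buys.} Working with $G$ over the whole interval $(T_1^\ast,T_3^\ast)$ gives you:
\begin{itemize}
\item a single, sign-definite derivative;
\item endpoint signs that follow from the sign pattern of $N$, rather than from checking a transcendental expression for every $\rho\in(1/2,1)$;
\item no need to track where $U_1^+(1)$ stops being real;
\item an explicit necessity argument;
\item a precise meaning of ``transversal'', namely in the family parametrised by $\bar P$, which is how it is used in Theorem~\ref{thm:po_existence}. Within the planar layer problem alone, coinciding one-dimensional manifolds cannot be transverse.
\end{itemize}
It also sidesteps the paper's sign bookkeeping: the branch of $\mathcal{W}^s(Q_3)$ reaching $T=1$ with $U>0$ is $U=T_3^\ast-T$, so the quantity actually matched is $U_1^+(1)=\bar P/(1-\rho)-1$. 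Finally, in the general model class the right-hand side of the $U$-equation is affine in $P$ with negative slope. Your argument therefore carries over essentially unchanged to any critical manifold whose graph $P(T)$ has one decreasing interval, with the admissible $P$-range bounded by the local extremal values of $P(T)$.

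\textbf{What the paper's route buys.} It gives closed-form graphs of the fast connections on each side of $T=1$, showing that the manifolds of $Q_3$ coincide with its eigenspaces there. It also gives an explicit scalar equation for $P^\ast(\rho)$, which is convenient for numerics.
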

\begin{proof}
The vector field of the layer problem \eqref{eq:gammas0_layp} is non-smooth for $T_1^\ast < T < T_3^\ast$, since $1 \in (T_1^\ast, T_3^\ast)$. In particular, at $T=1$, the vector field is continuous but not continuously differentiable. To determine the shape of the stable and unstable manifolds of $Q_1$ and $Q_3$, we study the vector field on both sides of the vertical line $T=1$.\\
The orbits of system \eqref{eq:gammas0_layp} passing through points on $\mathcal{C}_0^l$ are obtained by solving
\begin{equation}
\begin{aligned}
    \frac{\text{d}U}{\text{d}T} &= \frac{1}{U} \left(T - \frac{P}{1 - \rho\,T} \right), \\
    U(T_1^\ast) &= 0,
\end{aligned}
\end{equation}
by separation of variables, which leads to
\begin{equation}
    U_1^\pm (T) = 
    \pm \sqrt{T^2 - (T_1^\ast)^2 + 2 \frac{P}{\rho} \log \frac{1-\rho T}{1-\rho T_1^\ast}},\quad \frac{1-\rho}{\rho} < T < 1.
\end{equation}
In particular, we have that the stable (resp.~unstable) manifold $\mathcal{W}^{s}$ (resp.~$\mathcal{W}^{u}$) of a generic point $Q_1 \in \mathcal{C}_0^l$ to the left of the vertical line $T=1$ is given by
\begin{equation}
\begin{aligned}
    \mathcal{W}^{s} (Q_1) &= \left\{ (P, S, T, U) \, : \, P=\bar{P}, \, S=\bar{S},\, \frac{1-\rho}{\rho}<T<1, \,  U=U_1^-(T)  \right\}, \\
    \mathcal{W}^{u} (Q_1) &= \left\{ (P, S, T, U) \, : \, P=\bar{P}, \, S=\bar{S},\, \frac{1-\rho}{\rho}<T<1, \,  U=U_1^+(T)  \right\}.
\end{aligned}
\end{equation}
Analogously, the orbits of system \eqref{eq:gammas0_layp} passing through points on $\mathcal{C}_0^r$ are obtained by solving
\begin{equation} \label{eq:UTsys_3}
\begin{aligned}
    \frac{\text{d}U}{\text{d}T} &= \frac{1}{U} \left(T - \frac{P}{1 - \rho} \right), \\
    U(T_3^\ast) &= 0,
\end{aligned}
\end{equation}
yielding
\begin{equation}
    U_3^\pm (T) = \pm \left(\frac{P}{1-\rho } - T \right),\quad 1 < T < \frac{1}{4 \rho  (1-\rho )}.
\end{equation}
In this case, because of the linearity of system \eqref{eq:UTsys_3} when applying the method of separation of variables, stable and unstable manifolds of $Q_3 \in \mathcal{C}_0^r$ coincide on the interval $T \in \left(1, 1/(4 \rho  (1-\rho ))\right)$ with the corresponding stable and unstable eigenspaces $E^{s,u}(Q_3)$, namely
\begin{equation}
\begin{aligned}
    \mathcal{W}^{s} (Q_3) &= \left\{ (P, S, T, U) \, : \, P=\bar{P}, \, S=\bar{S},\, 1<T<\frac{1}{4 \rho  (1-\rho )}, \,  U=U_3^-(T)  \right\} = E^{s} (Q_3), \\
    \mathcal{W}^{u} (Q_3) &= \left\{ (P, S, T, U) \, : \, P=\bar{P}, \, S=\bar{S},\, 1<T<\frac{1}{4 \rho  (1-\rho )}, \,  U=U_3^+(T)  \right\} = E^{u} (Q_3).
\end{aligned}
\end{equation}
Our goal is now to prove that the intersection $\mathcal{W}^{s}(Q_1) \cap \mathcal{W}^{u}(Q_3)$ (resp.~$\mathcal{W}^{u}(Q_1) \cap \mathcal{W}^{s}(Q_3)$) is transversal. Thanks to the symmetry of the functions $U_{1,3}^\pm$ involved, it suffices to prove this result for one intersection, and the other one will follow directly. Therefore, we focus on $\mathcal{W}^{u}(Q_1) \cap \mathcal{W}^{s}(Q_3)$.\\
We check the intersection of these manifolds at $T=1$. 
That is, our aim is to show that there exists a unique solution $\bar{P}=P^\ast(\rho)$ with $1-\rho < P^\ast(\rho) < 1/(4\rho)$ such that $U_1^+(1)=U_3^-(1)$, i.e.~$U_1^+(1)-U_3^-(1)=0$. 
In order to show the existence and uniqueness of $\bar{P}=P^\ast(\rho)$, we note the following:
\begin{itemize}
    \item $U_1^+(1)$ is a strictly monotonically decreasing function of $\bar{P}$, since
    \begin{displaymath}
        \frac{\text{d}}{\text{d} P}U_1^+(1) = \frac{1}{\rho U_1^+(1)} \log \frac{1-\rho}{1-\rho T_1^\ast} < 0 
    \end{displaymath}
    because $T_1^\ast<1$. Moreover, $U_1^+(1)$ assumes
    a positive value at $\bar{P}=1-\rho$ (namely, the lower extremum of the range of admissible $\bar{P}$-values) and admits a zero at a value of $\bar{P} < 1/(4\rho)$.
    In fact, when substituting $T_1^\ast$ as in Eq.~\eqref{eq:equilCM}, we have that
    \[
    U_1^+(1) = \frac{\sqrt{2 \rho (\bar{P} + \rho) +\sqrt{1-4 \rho \bar{P} }+4 \rho \bar{P}  \log \left(\dfrac{2(1- \rho) }{\sqrt{1-4 \rho \bar{P} }+1}\right)-1}}{\sqrt{2} \rho },
    \]
    which satisfies $U_1^+(1) = 0 $ when
    \[
    2 \rho (\bar{P} + \rho) +\sqrt{1-4 \rho \bar{P} }+4 \rho \bar{P}  \log \left(\frac{2(1- \rho) }{1+\sqrt{1-4 \rho \bar{P} }}\right)-1=0,
    \]
    or, equivalently, when
    \[
    u_1^+(\bar{P}):=e^{-(2 \rho  (\bar{P}+\rho )+\sqrt{1-4 \rho \bar{P} }-1)/4 \rho \bar{P} }-\frac{2 (1-\rho )}{1+\sqrt{1-4 \rho \bar{P} }} = 0.
    \]
    The function $u_1^+(\bar{P})$ is negative at $\bar{P}=1-\rho$ and positive at $\bar{P}=1/(4 \rho)$ for any $1/2 < \rho < 1$; therefore, by the intermediate value theorem, it must admit a zero within this $\bar{P}$-range. 
    \item $U_3^-(1)$ is a linear, strictly monotonically increasing function of $\bar{P}$, equal to $0$ at $\bar{P}=1-\rho$.
\end{itemize}
Therefore, for any $\rho \in \left(1/2, 1 \right)$ there exists a unique $\bar{P}=P^\ast(\rho)$ with $1-\rho < P^\ast(\rho) < 1/(4\rho)$ such that $(U_1^+(1)-U_3^-(1))|_{\bar{P}(\rho)} = 0$. The transversality of the intersection $\mathcal{W}^{s}(Q_1) \cap \mathcal{W}^{u}(Q_3)$ follows from the non-degeneracy of $\bar{P}(\rho)$, i.e. from the transversality of the intersection of $U_1^+(1)$ and $U_3^-(1)$ as graphs over $P$.
\end{proof}

The double heteroclinic connection for a fixed value of $\rho$ can be visualised in Fig.~\ref{fig:doublehet}.
    \begin{figure}[!ht]
        \centering
        \begin{overpic}[width=.6\linewidth]{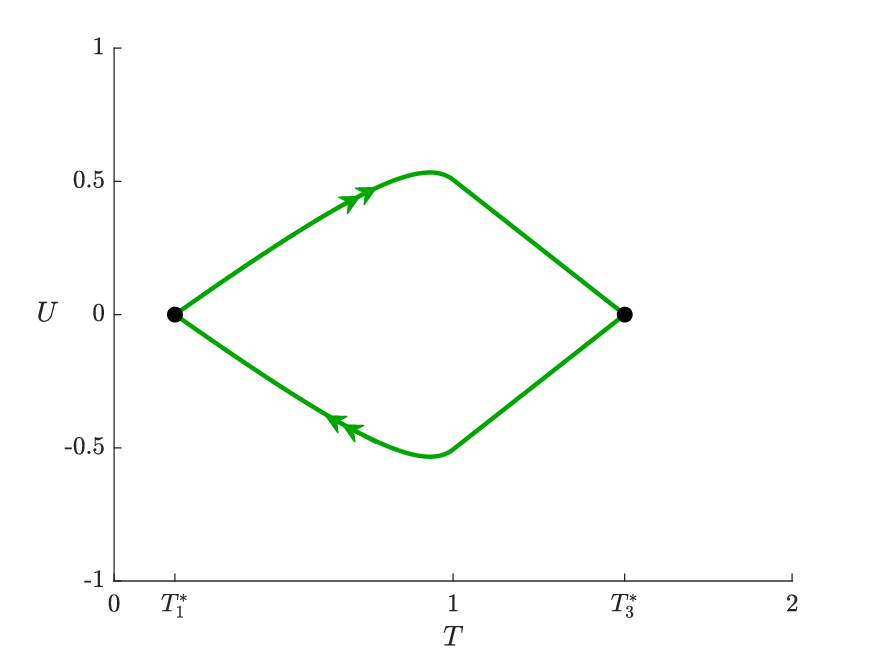}
            \put(17,34){\scriptsize{$Q_1$}}
            \put(70,34){\scriptsize{$Q_3$}}
            \put(46,51){\textcolor{darkg}{\scriptsize{$\varphi_\mathrm{h}^+$}}}
            \put(46,26){\textcolor{darkg}{\scriptsize{$\varphi_\mathrm{h}^-$}}}
        \end{overpic}\\
        \caption{Double heteroclinic connection $\varphi_\mathrm{h}^\pm$ for $\rho=0.9$ and $P^\ast(\rho) \approx 0.15$ in $(T,U)$-space between $Q_1$ and $Q_3$.
        }
        \label{fig:doublehet}
    \end{figure}




\subsection{Reduced problem}

The reduced problem is obtained by considering the singular limit $\eps \to 0$ in System \eqref{eq:gammas0_slow}, which leads to
\begin{subequations}\label{eq:gammas0_redpb}
    \begin{align}
        P' &= S,\\
        S' &= \frac{P}{1 - \rho\,\phi(T)}\left[d - g \left(1-\frac{P}{1 - \rho\,\phi(T)}\right)\right],
    \end{align}
\end{subequations}
together with the two algebraic equations that define the critical manifold $\mathcal{C}_0$ \eqref{eq:C0}.
In particular, we note that in virtue of eq.~\eqref{eq:C0} we have $P' = T'(1-\rho(\phi(T)-\phi'(T)T))$. On each of the submanifolds $\mathcal{C}_0^{l,m,r}$ \eqref{eq:C0_det}, we have that $1-\rho(\phi(T)-\phi'(T)T) \neq 0$ by construction, which allows us to rewrite System \eqref{eq:gammas0_redpb} on every individual submanifold $\mathcal{C}_0^{l,m,r}$ in terms of the variables $T$ and $S$ as
\begin{subequations}\label{eq:gammas0_redpb_new}
    \begin{align}
        T' &= \frac{S}{1-\rho \phi(T)-\rho \phi'(T)T},\\
        S' &= T\left( d-g(1-T)\right).
    \end{align}
\end{subequations}
In our subsequent analysis, we focus our attention on the reduced flow on $\mathcal{C}_0^l$ and $\mathcal{C}_0^r$, since these are the manifolds on which the slow parts of the sought-after singular periodic orbit will turn out to be located; this is a direct consequence of the fact that $\mathcal{C}_0^l$ and $\mathcal{C}_0^r$ are normally hyperbolic, whereas $\mathcal{C}_0^m$ is not.\\
In addition, to connect the analysis of the reduced flow to the scenario in the layer problem where heteroclinic connections exist (cf. Proposition \ref{prop:hetconn}), we will restrict the analysis of the reduced problem to the parameter regime $1/2 < \rho < 1$.\\

\subsubsection{The reduced problem on \texorpdfstring{$\mathcal{C}_0^l$}{C0l}} \label{sec:redC0l}
On $\mathcal{C}_0^l$, we have \mbox{$1-\rho \phi(T)-\rho \phi'(T)T = 1-2\rho T$} (cf.~\eqref{eq:C0_det}); from the definition of $\mathcal{C}_0^l$, we see that $1 - 2 \rho T > 0$ on $\mathcal{C}_0^l$. We can therefore apply a rescaling of the independent variable in \eqref{eq:gammas0_redpb_new} by multiplying both right-hand sides by $1-2\rho T$ to obtain
\begin{subequations}\label{eq:gammas0_redpb_new_l}
    \begin{align}
        T' &= S,\\
        S' &= T\left( d-g(1-T)\right)(1-2\rho T).
    \end{align}
\end{subequations}
System \eqref{eq:gammas0_redpb_new_l} has a Hamiltonian function $H_l$, that is given by
\begin{equation}\label{eq:C0l_Hamiltonian}
    H_l(T,S) = \frac{1}{2}S^2 + T^2 \left(\frac{g-d}{2} + \frac{1}{3}(2 d \rho -g(1+2\rho)) T + \frac{g \rho}{2} T^2\right).
\end{equation}
The system admits three equilibria $(\bar{T}_i,0)$, $i=1,2,3$, where
\begin{equation}\label{eq:equilibria_redpb}
\bar{T}_1 = 0, \qquad \bar{T}_2=\frac{g-d}{g}, \qquad \bar{T}_3 = \frac{1}{2 \rho}.
\end{equation}
We note that $\bar{T}_1$ and $\bar{T}_3$ are located at the boundaries of $\mathcal{C}_0^l$. This is not problematic for our analysis: we analyse the orbit structure of system \eqref{eq:gammas0_redpb_new_l} for $T \in \mathbb{R}$, and a posteriori restrict the flow to $0 < T <1/(2\rho)$. In addition, we note that the steady-state $(\bar{T}_2,0)$ lies on $\mathcal{C}_0^l$ if and only if $g>d$ and $\rho<g/(2(g-d))$. In line with the biological ranges of the model parameters used in \cite{GIS_2026}, we henceforth assume $g > 2 d$ and $\rho > g/(2(g-d))$, so that $\bar{T}_2 > 1/(2 \rho)$. This assumption is simplifying rather than restrictive; the analysis below can be repeated for the case that $(\bar{T}_2,0)$ does lie on $\mathcal{C}_0^l$. While this would induce a qualitative change in the orbit structure of the reduced problem \eqref{eq:gammas0_redpb_new_l} on $\mathcal{C}_0^l$, the existence of `connecting' slow orbit segments on $\mathcal{C}_0^l$ would not be altered, hence the statement of the upcoming Proposition \ref{prop:singsol} would still hold.\\ 
The Jacobian matrix associated to \eqref{eq:gammas0_redpb_new_l} is given by
\[
J_l = \begin{pmatrix}
    0 & 1 \\
    d(1-4\rho T)-g\left(1+6\rho T^2-2T(1+2\rho)\right) & 0
\end{pmatrix},
\]
and has vanishing trace;from the structure of the Hamiltonian function $H_l$ \eqref{eq:C0l_Hamiltonian} and sign of the determinant of $J_l$ evaluated at $\bar{T}_{1,3}$, we conclude that $(\bar{T}_1,0)$ is a center, whereas $(\bar{T}_3,0)$ is a saddle. 
The orbits of system \eqref{eq:gammas0_redpb_new_l} lie on the level sets of $H_l$ \eqref{eq:C0l_Hamiltonian}. First, we observe that $\left\{H_l(T,S)) = 0 \right\} = \left\{(\bar{T}_1,0)\right\}$ and that $(\bar{T}_1,0)$ is a global minimum of $H_l$. Second, the level set containing $(\bar{T}_3,0)$ is characterised by $H_l(\bar{T}_3,0) = (4 \rho(g-d)-g)/(96 \rho^3) > 0$. Hence, all level sets $\left\{H_l(T,S) = H_0\right\}$ with $0 < H_0 < H_l(\bar{T}_3,0)$ orthogonally intersect the $S$-axis at a point in between $(\bar{T}_1,0)$ and $(\bar{T}_3,0)$, see also Fig.~\ref{fig:red_pb_C0l}.\\
To prepare the statement of the upcoming Proposition \ref{prop:singsol}, we note that the value $P^\ast(\rho)$ as found in Proposition \ref{prop:hetconn} characterises the existence of a pair of heteroclinic connections in the layer problem. Since on $\mathcal{C}_0^l$ the relation $P = T(1-\rho T)$ is a bijection, we define $T^\ast_l(\rho) = T^\ast_1(P^\ast(\rho)) = (1-\sqrt{1-4 \rho P^\ast(\rho)}/(2 \rho)$, cf. \eqref{eq:equilCM}. For future reference, we define the so-called take-off and touchdown sets $\mathcal{T}_o^l, \mathcal{T}_d^l \subset \mathcal{C}_0^l$ as
\begin{equation} \label{eq:totdl}
\begin{aligned}
    \mathcal{T}_o^l &= \left\{ (P, S, T, U) \, : \, P=P^\ast(\rho), \, - S^\ast_l(\rho) < S < 0,\, T=T_l^\ast(\rho), \, U = 0  \right\}, \\
    \mathcal{T}_d^l &= \left\{ (P, S, T, U) \, : \, P=P^\ast(\rho), \, 0 < S < S^\ast_l(\rho),\, T=T_l^\ast(\rho), \, U = 0 \right\},
\end{aligned}
\end{equation}
where $S^\ast_l(\rho)$ is defined as the positive $S$-value for which $H_l(T_l^\ast(\rho),S^\ast_l(\rho)) = H_l(\bar{T}_3,0)$, i.e.
\begin{align}\label{eq:Sl}
    S^\ast_l(\rho) &= \sqrt{H_l(\bar{T}_3,0)- (T_l^\ast(\rho))^2\left(g-d + \frac{2}{3}(2 d \rho -g(1+2\rho)) T_l^\ast(\rho) + g \rho (T_l^\ast(\rho))^2\right)}\\
    &= \sqrt{\frac{1-4 \rho P^\ast(\rho)}{96 \rho^3}}\sqrt{12 \rho(1+P^\ast(\rho)) - 9 g - 12 d \rho + 8(g - \rho(g-d))\sqrt{1-4 \rho P^\ast(\rho)}}.~\nonumber
\end{align}



\begin{figure}[!ht]
    \centering
    \begin{overpic}[scale=0.6]{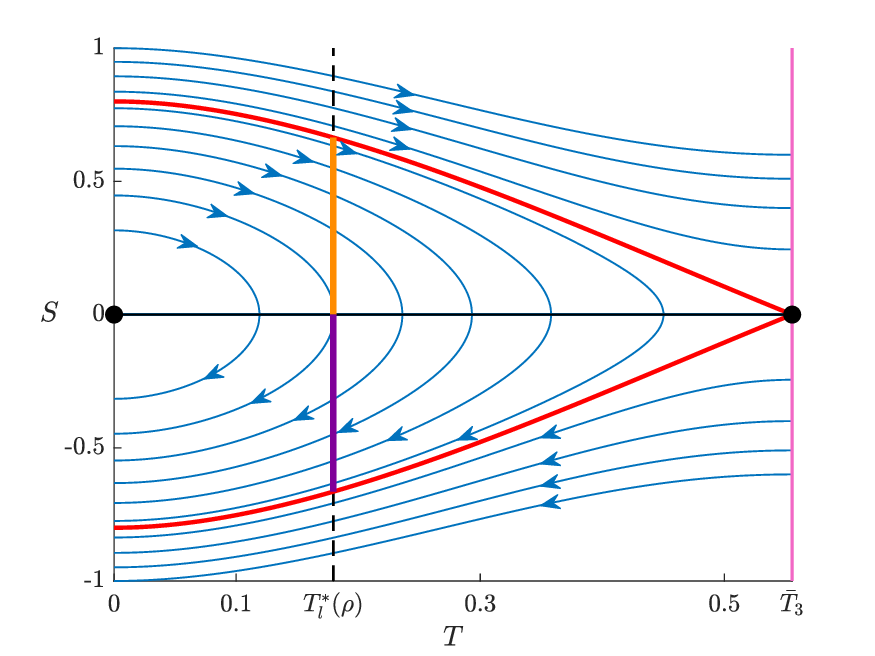}
    \end{overpic}
    \caption{Sketch of the dynamics for the reduced problem \eqref{eq:gammas0_redpb_new_l} on $\mathcal{C}_0^l$ in $(T,S)$-space with $\rho = 0.9$. The black, dashed line corresponds to $T=T_l^\ast(\rho)$; the magenta vertical line is located at $T=\bar{T}_3$. The red curves correspond to the level set $\{ H_l(T,S)=H_l(\bar{T}_3,0) \}$. The orange (resp.~purple) curve represents the touch-down (resp.~take-off) set $\mathcal{T}_d^l$ (resp.~$\mathcal{T}_o^l$), see eq.~\eqref{eq:totdl}. The black dots mark the equilibria at $(\bar{T}_1,0)$ and $(\bar{T}_3,0)$, whereas the arrows indicate the flow direction.}
    \label{fig:red_pb_C0l}
\end{figure}

\subsubsection{The reduced problem on \texorpdfstring{$\mathcal{C}_0^r$}{C0r}}
On $\mathcal{C}_0^r$ we have \mbox{$1-\rho \phi(T)-\rho \phi'(T)T = 1-\rho$} (cf.~\eqref{eq:C0_det}), which is again a strictly positive quantity. Applying a rescaling of the independent variable in \eqref{eq:gammas0_redpb_new} by multiplying both right-hand sides by $1-\rho$ leads to
\begin{subequations}\label{eq:gammas0_redpb_new_r}
    \begin{align}
        T' &= S,\\
        S' &= T\left( d-g(1-T)\right)(1-\rho).
    \end{align}
\end{subequations}
System \eqref{eq:gammas0_redpb_new_r} has a Hamiltonian function $H_r$, that is given by
\begin{equation}\label{eq:C0r_Hamiltonian}
    H_r(T,S) = \frac{1}{2}S^2 + (1-\rho)T^2 \left(\frac{g-d}{2} - \frac{g}{3}T \right).
\end{equation}
The equilibria admitted by this system are $(\bar{T}_1,0)$ and $(\bar{T}_2,0)$ \eqref{eq:equilibria_redpb}; however, as both $\bar{T}_1 < 1$ and $\bar{T}_2<1$, these equilibria do not lie on $\mathcal{C}_0^r$ \eqref{eq:C0_det}. That being said, we can use the nature of $(\bar{T}_1,0)$ and $(\bar{T}_2,0)$ to determine the orbit structure of system \eqref{eq:gammas0_redpb_new_r}, and a posteriori restrict to $T>1$. Based on the Jacobian matrix associated to \eqref{eq:gammas0_redpb_new_r}, given by
\begin{equation}
J_r = \begin{pmatrix}
    0 & 1 \\
    (1-\rho)\left(d-g(1-2T)\right) & 0
\end{pmatrix},
\end{equation}
and the structure of the Hamiltonian function $H_r$ \eqref{eq:C0r_Hamiltonian}, we conclude that $(\bar{T}_1,0)$ is a center, whereas $(\bar{T}_2,0)$ is a saddle. The orbits of system \eqref{eq:gammas0_redpb_new_r} lie on the level sets of $H_r$ \eqref{eq:C0r_Hamiltonian}, and the level set containing $(\bar{T}_2,0)$ is characterised by $H_r(\bar{T}_2,0) = (1-\rho)(g-d)^3/(6g^2)$. Hence, all level sets $\left\{H_r(T,S) = H_0\right\}$ with $H_0 < H_r(1,0) < H_r(\bar{T}_2,0)$ orthogonally intersect the $S$-axis to the right of $T=1$, see also Fig.~\ref{fig:red_pb_C0r}.\\
Again, with the heteroclinic connection of Proposition \ref{prop:hetconn} in mind, we define $T^\ast_r(\rho) = T^\ast_3(P^\ast(\rho)) = P^\ast(\rho)/(1-\rho)$, cf. \eqref{eq:equilCM}. For future reference, we define the take-off and touchdown sets $\mathcal{T}_o^r, \mathcal{T}_d^r \subset \mathcal{C}_0^r$ as
\begin{equation} \label{eq:totdr}
\begin{aligned}
    \mathcal{T}_o^r &= \left\{ (P, S, T, U) \, : \, P=\bar{P}(\rho), \, 0 < S < \bar{S}^\ast_r(\rho),\, T=T^\ast_r(\rho), \, U =0  \right\}, \\
    \mathcal{T}_d^r &= \left\{ (P, S, T, U) \, : \, P=\bar{P}(\rho), \,  - \bar{S}^\ast_r(\rho) < S < 0,\, T=T^\ast_r(\rho), \, U =0  \right\},
\end{aligned}
\end{equation}
where $S^\ast_r(\rho)$ is defined as the positive $S$-value for which $H_r(T^\ast_r(\rho),S^\ast_r(\rho)) = H_r(1,0)$, i.e.
\begin{align}\label{eq:Sr}
    S^\ast_r(\rho) &= \sqrt{H_r(1,0) - (1-\rho)(T^\ast_r(\rho))^2\left(g-d-\frac{2}{3}g T^\ast_r(\rho)\right)}\\
    &= \sqrt{\frac{(1-\rho-P^\ast(\rho))\left(g(1-\rho+2 P^\ast(\rho))(1-\rho-P^\ast(\rho)) - 3 d (1-\rho-P^\ast(\rho))(1-\rho)\right)}{3(1-\rho)^2}}.~\nonumber
\end{align}

\begin{rmk}
    We note that in general $\bar{S}_l \neq \bar{S}_r$, cf.~\eqref{eq:Sl} and \eqref{eq:Sr}; consequently, the bounds for the $S$-ranges in the take-off/touch-down sets $\mathcal{T}^l_{o,d}$ \eqref{eq:totdl} and $\mathcal{T}_{o,d}^r$ \eqref{eq:totdr} should be restricted to the minimum between these two values, in order to ensure that an admissible heteroclinic connection can be established between $\mathcal{C}_0^l$ and $\mathcal{C}_0^r$.
\end{rmk}

\begin{figure}[!ht]
    \centering
    \includegraphics[scale=0.6]{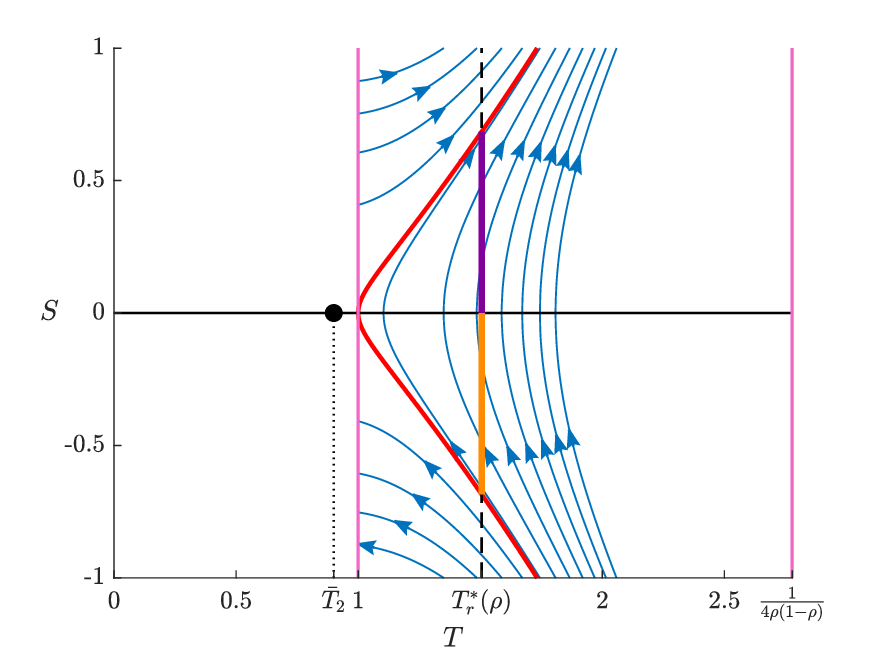}
    \caption{Sketch of the dynamics for the reduced problem \eqref{eq:gammas0_redpb_new_r} on $\mathcal{C}_0^r$ in $(T,S)$-space with $\rho = 0.9$. The black, dashed line corresponds to $T=T^\ast_r(\rho)$; the magenta vertical lines indicate $T=1$ and $T=1/(4\rho(1-\rho))$. The red curve corresponds to the level set $\left\{H_r(T,S) = H_r(1,0)\right\}$. The orange (resp.~purple) curve represents the touch-down (resp.~take-off) set $\mathcal{T}_d^r$ (resp.~$\mathcal{T}_o^r$),  see Eq.~\eqref{eq:totdr}. The arrows indicate the flow direction.}
    \label{fig:red_pb_C0r}
\end{figure}

\subsection{Singular periodic orbits}

The results presented in the previous subsections can be used to constructively prove the existence of far-from-equilibrium, multi-scale stationary periodic patterns in system \eqref{eq:model_specific}. We first provide the singular skeleton for these periodic patterns in Proposition \ref{prop:singsol}, which will allow us to prove the existence of periodic patterns for sufficiently small $0<\eps \ll 1$ in Theorem \ref{thm:po_existence}.

\begin{prop}[Singular skeleton] \label{prop:singsol}
    For $g > 2 d$, $g/(2(g-d)) < \rho < 1$ and $\varepsilon=0$, the (2,2)-fast-slow system \eqref{eq:gammas0_slow}, \eqref{eq:gammas0_fast} admits a one-parameter family of singular periodic orbits $\left\{ \Gamma_0^\mu \right\}_\mu$ consisting of precisely two fast and two slow subsystem trajectories with slow parts lying entirely in $\mathcal{C}_0^{l}$ and $\mathcal{C}_0^{r}$.
\end{prop}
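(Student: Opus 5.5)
The singular skeleton will be assembled from the fast connections of Proposition~\ref{prop:hetconn} and the Hamiltonian slow flows on $\mathcal{C}_0^l$ and $\mathcal{C}_0^r$.

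\textbf{Fast jumps.} Fix $\rho\in(g/(2(g-d)),1)$; this interval is nonempty and contained in $(1/2,1)$ because $g>2d$. Proposition~\ref{prop:hetconn} then gives a unique $P^\ast(\rho)$ for which, for every fixed $\bar S$, the layer problem \eqref{eq:gammas0_layp} has the connections $\varphi_\mathrm{h}^+$ from $Q_1=(T_l^\ast,0)$ to $Q_3=(T_r^\ast,0)$ and $\varphi_\mathrm{h}^-$ back. Since $P$ and $S$ are constant along fast fibres, a jump leaves $\mathcal{C}_0^l$ at $(P^\ast,\bar S,T_l^\ast,0)$ and lands on $\mathcal{C}_0^r$ at $(P^\ast,\bar S,T_r^\ast,0)$ with the same $S$, or conversely. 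So the only admissible jump points are the lines $T=T_l^\ast$ and $T=T_r^\ast$, and the periodic orbit must jump at equal $S$-values.

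\textbf{Slow segments and the family.} The family parameter is $\mu\in(0,\min(S_l^\ast,S_r^\ast))$. On $\mathcal{C}_0^l$, in the rescaled flow \eqref{eq:gammas0_redpb_new_l} (a positive time rescaling, so orbits and directions are unchanged), I will take the level set $H_l(T,S)=H_l(T_l^\ast,\mu)$. Because $0<H_l<H_l(\bar T_3,0)$, this level set is a closed curve around the centre $(\bar T_1,0)$, lying inside the separatrix loop through the saddle $(\bar T_3,0)$. Hence it stays in $0<T<1/(2\rho)$, i.e.\ in $\mathcal{C}_0^l$, where $T'=S$. By the symmetry $S\mapsto -S$ of $H_l$, it meets $T=T_l^\ast$ exactly at $S=\pm\mu$. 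The orbit segment from $(T_l^\ast,\mu)\in\mathcal{T}_d^l$ to $(T_l^\ast,-\mu)\in\mathcal{T}_o^l$ passes through $T<T_l^\ast$ (through $T=0$ at $S=0$), since $T$ increases while $S>0$.

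On $\mathcal{C}_0^r$ I will use the flow \eqref{eq:gammas0_redpb_new_r} with the level set $H_r=H_r(T_r^\ast,\mu)$. The segment must start at $(T_r^\ast,-\mu)$ and end at $(T_r^\ast,\mu)$, turning at a point $(T_{\max},0)$ with $T_{\max}>T_r^\ast$. This requires $\mu<S_r^\ast$: then $H_r(T_r^\ast,\mu)<H_r(1,0)$, so the turning point lies beyond the level through $(1,0)$. Since $H_r(\cdot,0)$ is decreasing for $T>\bar T_2$, the segment stays in $T>1$, i.e.\ in $\mathcal{C}_0^r$.

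The skeleton $\Gamma_0^\mu$ is then the closed concatenation
\[
\mathcal{C}_0^l\text{ slow}\;\to\;\varphi_\mathrm{h}^+\text{ at }S=-\mu\;\to\;\mathcal{C}_0^r\text{ slow}\;\to\;\varphi_\mathrm{h}^-\text{ at }S=\mu .
\]
It consists of exactly two fast and two slow pieces.

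\textbf{Main obstacle: direction consistency.} The fast jump $\varphi_\mathrm{h}^+$ goes from $\mathcal{C}_0^l$ to $\mathcal{C}_0^r$, so the right-hand slow segment must begin at $S=-\mu$. On $\mathcal{C}_0^r$ the flow at $T_r^\ast$ has $S'=(1-\rho)T(d-g(1-T))$, and its sign is what decides whether the orbit with $S<0$ actually reaches $T>T_r^\ast$. Specifically, I must verify that $T_r^\ast\ge\bar T_2$, or otherwise the monotonicity of $H_r$ for $T>1$, and that the orbit reflects and returns to $T_r^\ast$ rather than escaping.

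I expect the true picture to be the following. Because $H_r$ is decreasing in $T$ on $[1,\infty)$ (as $\bar T_2<1$), level sets with $S=\pm\mu$ at $T_r^\ast$ turn at some $T\in(1,T_r^\ast)$, to the left of $T_r^\ast$. This is consistent with Fig.~\ref{fig:red_pb_C0r}, where the level through $(1,0)$ bounds the admissible region. In that case the $\mathcal{C}_0^r$ segment runs from $(T_r^\ast,-\mu)$ leftwards to $(T_{\min},0)$ with $T_{\min}>1$, then back to $(T_r^\ast,\mu)$.

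Correspondingly, on $\mathcal{C}_0^l$ the orientation must be checked against the definitions of $\mathcal{T}_o^l$ and $\mathcal{T}_d^l$: the segment runs from $S=\mu$ around the centre to $S=-\mu$, taking the portion of the closed curve compatible with the flow direction.

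I will handle all of this by writing each piece explicitly and checking the sign of $S$ at each endpoint against the flow direction and the take-off/touch-down sets \eqref{eq:totdl}, \eqref{eq:totdr}. If the orientation turns out to be inconsistent for one choice, I will simply swap the roles of $\pm\mu$, which is permitted by the $S\mapsto-S$, $\xi\mapsto-\xi$ reversibility of the full system.
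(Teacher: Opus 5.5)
Your construction has the same structure as the paper's: jumps at $T=T_l^\ast$ and $T=T_r^\ast$ with equal $S$-values, slow pieces on level sets of $H_l$ and $H_r$, and $H_r(T_r^\ast,\mu)<H_r(1,0)$ to keep the right piece in $T>1$. Parametrising by the jump value of $S$ instead of the value of $H_l$ is harmless, and $\mu<\min(S_l^\ast,S_r^\ast)$ makes the paper's ``$\mu$ sufficiently small'' explicit. Your final concatenation, with $\varphi_\mathrm{h}^+$ at $S=-\mu$ and $\varphi_\mathrm{h}^-$ at $S=+\mu$, is also right.

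\textbf{The gap is the slow piece on $\mathcal{C}_0^l$.} This is exactly where the claim ``slow parts lying entirely in $\mathcal{C}_0^l$'' must be earned.
\begin{itemize}
\item The component of $\{H_l=H_l(T_l^\ast,\mu)\}$ through $(T_l^\ast,\pm\mu)$ encircles the centre $(0,0)$, so it necessarily enters $T<0$. Your assertion that it stays in $0<T<1/(2\rho)$ is false.
\item The arc you pick, through $T<T_l^\ast$, is traversed from $(T_l^\ast,-\mu)$ to $(T_l^\ast,\mu)$, i.e.\ in the wrong direction. It also leaves $\mathcal{C}_0^l$ through $T=0$, and it never passes ``through $T=0$ at $S=0$'', since $H_l(0,0)=0<H_l(T_l^\ast,\mu)$.
\item As you note yourself, $T$ increases while $S>0$, so the forward orbit from $(T_l^\ast,\mu)$ moves right.
\end{itemize}
The correct piece is the paper's $\psi_0^l$, to the right of $T=T_l^\ast$. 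On $\mathcal{C}_0^l$ one has $T'=S/(1-2\rho T)$ and $S'=T(d-g(1-T))<0$. The sign holds because $1/(2\rho)<\bar T_2$; this is where $\rho>g/(2(g-d))$ is used, and it is also what makes $(\bar T_3,0)$ the saddle you invoke. So the orbit runs from $(T_l^\ast,\mu)$ to a turning point $(T_{\max},0)$ and back to $(T_l^\ast,-\mu)$. Moreover $T_l^\ast<T_{\max}<1/(2\rho)$, because the potential part of $H_l$ is strictly increasing on $(0,1/(2\rho))$ and $H_l(T_l^\ast,\mu)<H_l(\bar T_3,0)$.

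\textbf{On $\mathcal{C}_0^r$,} your first version (turning at $T_{\max}>T_r^\ast$) is wrong, and your ``true picture'' is right; commit to it and justify it. For $T>1$ one has $S'>0$ and $T'=S/(1-\rho)$. So from $(T_r^\ast,-\mu)$ the orbit moves left to $(T_{\min},0)$ with $T_{\min}\in(1,T_r^\ast)$ and returns to $(T_r^\ast,\mu)$.

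\textbf{The reversibility fallback does not work.} The reflection $(\xi,S,U)\mapsto(-\xi,-S,-U)$ sends $\varphi_\mathrm{h}^+$ at $S=-\mu$ to $\varphi_\mathrm{h}^-$ at $S=+\mu$ and maps each slow arc to itself. Hence it maps $\Gamma_0^\mu$ onto itself and cannot produce the opposite orientation. That orientation is in fact impossible:
\begin{itemize}
\item an orbit landing on $\mathcal{C}_0^r$ with $S>0$ has $T'>0$ and $S'>0$, so it never returns to $T_r^\ast$;
\item the complementary arc on $\mathcal{C}_0^l$ exits through $T<0$.
\end{itemize}
The orientation is forced by these sign checks, and carrying them out is the actual content of the proof, not something to defer.
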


\begin{proof}
Fixing a value $H_0 := \mu$ for the Hamiltonian $H_l$ \eqref{eq:C0l_Hamiltonian}
allows us to select one of the level curves of the reduced problem on $\mathcal{C}_0^l$. Choosing $0 < \mu < H_l(\bar{T}_3,0) = (4 \rho(g-d)-g)/(96 \rho^3)$ ensures that the level curve orthogonally intersects the $S$-axis at a point in between $(\bar{T}_1,0)$ and $(\bar{T}_3,0)$.\\
By Proposition \ref{prop:hetconn}, for any $1/2 < \rho < 1$ there is a unique value $P^\ast(\rho)$ such that a double heteroclinic connection exists between $(T,U) = (T^\ast_1,0)$ and $(T,U) = (T^\ast_3,0)$, which defines the take-off curve $\mathcal{T}_o^l$ \eqref{eq:totdl}. The level curve $\left\{H_l(T,S) = \mu\right\} \subset \mathcal{C}_0^l$ transversally intersects the vertical line segment $\mathcal{T}_o^l$ at a unique negative $S$-value $S=-\bar{S}^\mu$, where $\bar{S}^\mu>0$ is the unique positive solution to the equation $H_l(T^\ast_l,\bar{S}^\mu) = \mu$.\\
On $\mathcal{C}_0^r$, the horizontal line $\left\{S = \bar{S}^\mu\right\}$ transversally intersects the vertical line $\left\{T = T^\ast_r\right\}$. Since $\bar{S}^\mu$ is strictly monotonic in $\mu$ (cf.~\eqref{eq:C0l_Hamiltonian}), we can choose $\mu$ sufficiently small such that $H_r(T^\ast_r,\bar{S}^\mu) < H_r(1,0)$, thereby a posteriori restricting the range of $\mu$. This ensures that $(T^\ast_r,-\bar{S}^\mu) \in \mathcal{T}_d^r$. The level curve $\left\{H_r(T,S) = H_r(T^\ast_r,-\bar{S}^\mu)\right\}$ now uniquely determines the slow orbit segment $\psi_0^r$ on $\mathcal{C}_0^r$ that transversally intersects the take-off curve $\mathcal{T}_o^r$ at $(T,S) = (T^\ast_r,\bar{S}^\mu)$. This allows us to close the loop by concatenating the heteroclinic orbit $\varphi_\mathrm{h}^-$ from $(T,U) = (T^\ast_3,0)$ to $(T,U) = (T^\ast_1,0)$, ending up at the level set $\left\{H_l(T,S) = H_l(T^\ast_l,\bar{S}^\mu)\right\} = \left\{H_l(T,S) = H_l(T^\ast_l,-\bar{S}^\mu) = \mu\right\}$, uniquely determining the orbit segment $\psi_0^l$ on $\mathcal{C}_0^l$ that lies to the right of the vertical line $\left\{T = T^\ast_l\right\} \subset \mathcal{C}_0^l$.\\
The singular orbit $\Gamma_0^\mu$ is given by the concatenation $\varphi_\mathrm{h}^+ \cup \psi_0^r \cup \varphi_\mathrm{h}^- \cup \psi_0^l$.
\end{proof}

A sketch of a singular periodic orbit as constructed in Proposition~\ref{prop:singsol} for a fixed $\mu$ is shown in Figs.~\ref{fig:sketch_singsol}--\ref{fig:sketch_singsol_3D}.


\begin{figure}[!ht]
    \centering
    \begin{overpic}[scale=0.7]{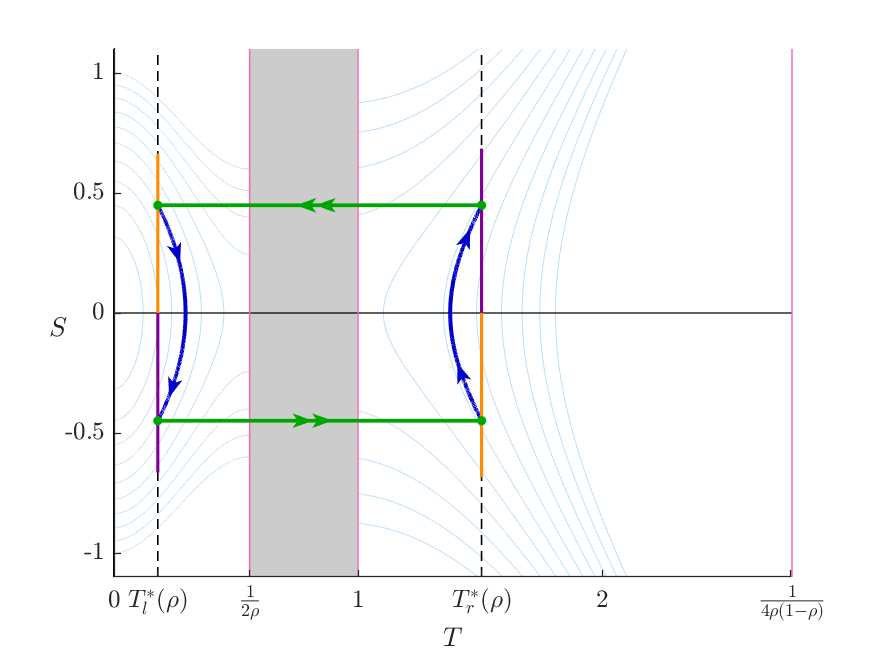}
    \put(20,62){\footnotesize{$\mathcal{C}_0^l$}}
    \put(33,62){\footnotesize{$\mathcal{C}_0^m$}}
    \put(78,62){\footnotesize{$\mathcal{C}_0^r$}}
    \put(22,41){\textcolor{darkb}{\footnotesize{$\psi_0^l$}}}
    \put(47.5,41){\textcolor{darkb}{\footnotesize{$\psi_0^r$}}}
    \put(34,29.5){\textcolor{darkg}{\footnotesize{$\varphi_\mathrm{h}^-$}}}
    \put(34,48.5){\textcolor{darkg}{\footnotesize{$\varphi_\mathrm{h}^+$}}}
    \end{overpic}
    \caption{Sketch of a singular periodic solution projected on $(T,S)$-space obtained by matching slow (blue) orbits $\psi_0^{l,r}$ on $\mathcal{C}_0^{l,r}$ with fast (green) heteroclinic ``jumps'' $\varphi_\mathrm{h}^\pm$ at $S=\pm \bar{S}^\mu$ with $\mu=0.2$. This essentially corresponds to an overlay of the regions visualised in Figs.~\ref{fig:red_pb_C0l}-\ref{fig:red_pb_C0r}. The gray area corresponds to $\mathcal{C}_0^m$, which is not involved in the construction.~\\
    }
    \label{fig:sketch_singsol}
\end{figure}

\begin{figure}[!ht]
    \begin{minipage}{.4\textwidth}
    \centering
    \begin{overpic}[scale=0.45]{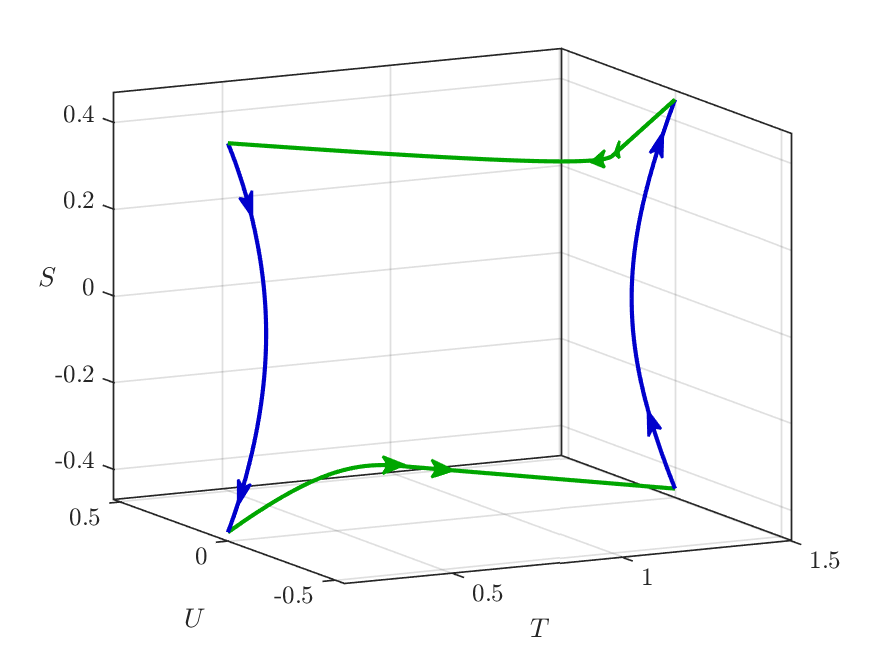}
    \put(24,38){\textcolor{darkb}{\footnotesize{$\psi_0^l$}}}
    \put(75,38){\textcolor{darkb}{\footnotesize{$\psi_0^r$}}}
    \put(45,25){\textcolor{darkg}{\footnotesize{$\varphi_\mathrm{h}^+$}}}
    \put(59,53){\textcolor{darkg}{\footnotesize{$\varphi_\mathrm{h}^-$}}}
    \end{overpic}\\
    (a)
    \end{minipage}
    \hspace{1cm}
    \begin{minipage}{.4\textwidth}
    \centering
    \begin{overpic}[scale=0.45]{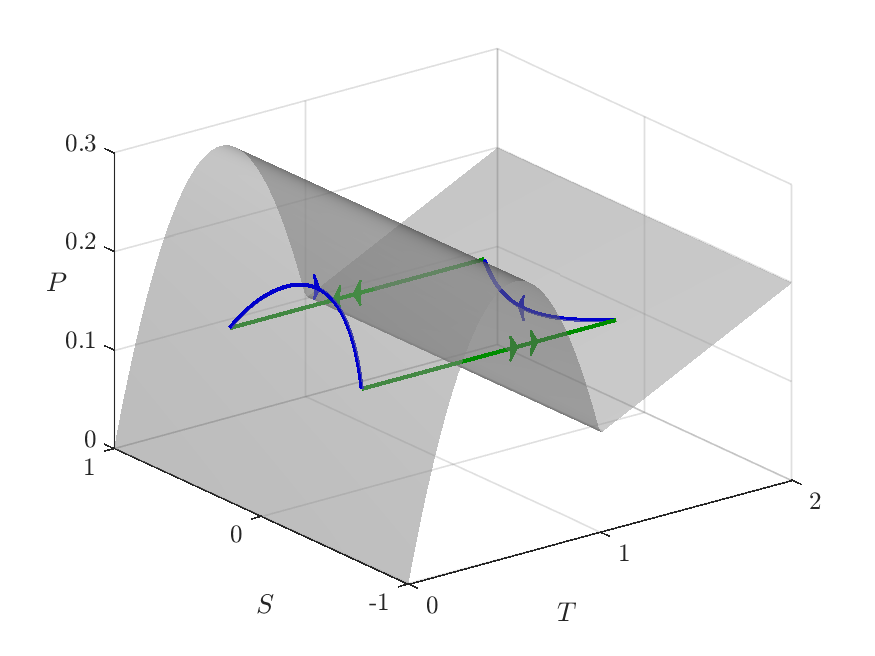}
    \put(28,45){\textcolor{darkb}{\footnotesize{$\psi_0^l$}}}
    \put(62,42){\textcolor{darkb}{\footnotesize{$\psi_0^r$}}}
    \put(31,36){\textcolor{darkg}{\footnotesize{$\varphi_\mathrm{h}^+$}}}
    \put(52,28){\textcolor{darkg}{\footnotesize{$\varphi_\mathrm{h}^-$}}}
    \end{overpic}\\
    (b)
    \end{minipage}     
    \caption{Sketch of a singular periodic solution projected in (a) $(T,U,S)$-space and (b) $(T,S,P)$-space obtained by matching slow (blue) orbits $\psi_0^{l,r}$ on $\mathcal{C}_0^{l,r}$ with fast (green) heteroclinic ``jumps'' $\varphi_\mathrm{h}^\pm$ at $S=\pm \bar{S}^\mu$ with $\mu=0.2$
    and $P=P^\ast(\rho)$. 
    The light gray surface in (b) represents the critical manifold $\mathcal{C}_0$ \eqref{eq:C0}.}
    \label{fig:sketch_singsol_3D}
\end{figure}


\begin{thm}[Persistence of periodic orbits]\label{thm:po_existence}
    Let $g>2d$ and $g/(2(g-d)) < \rho < 1$. There exists an $\eps_0>0$ such that for all $0<\eps<\eps_0$, system \eqref{eq:gammas0_slow} admits a one-parameter family $\left\{\Gamma_\eps^\mu\right\}_\mu$ of periodic orbits that are $\mathcal{O}(\eps)$-close to the singular skeleton family $\left\{\Gamma_0^\mu\right\}_\mu$ as defined in Proposition \ref{prop:singsol}.
\end{thm}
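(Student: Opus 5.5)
The proof will follow the standard geometric singular perturbation persistence argument of \cite{DoelmanVeerman.2015, Hek2010}, adapted to the piecewise-smooth setting created by $\phi$.

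\textbf{Step 1: persistence of the slow manifolds.} We restrict to compact pieces $\mathcal{K}^l\subset\mathcal{C}_0^l$ and $\mathcal{K}^r\subset\mathcal{C}_0^r$ that contain the slow segments $\psi_0^l$ and $\psi_0^r$ of $\Gamma_0^\mu$ in their interior. These pieces stay a fixed distance away from the fold $T=1/(2\rho)$ and from the non-smooth line $T=1$. Near them the vector field \eqref{eq:gammas0_fast} is smooth: for $\psi_0^l$ we have $T<1$, and for $\psi_0^r$ we have $T>1$. The pieces are normally hyperbolic, with eigenvalues $\lambda_{1,3}^\pm$. Fenichel theory then gives locally invariant slow manifolds $\mathcal{C}_\eps^{l,r}$ that are $\mathcal{O}(\eps)$-close to $\mathcal{K}^{l,r}$, together with three-dimensional stable and unstable manifolds $\mathcal{W}^{s,u}(\mathcal{C}_\eps^{l,r})$ that are $\mathcal{O}(\eps)$-close to their singular counterparts.

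The slow flow on $\mathcal{C}_\eps^{l,r}$ is an $\mathcal{O}(\eps)$-perturbation of the reduced flows \eqref{eq:gammas0_redpb_new_l} and \eqref{eq:gammas0_redpb_new_r}. We note a symmetry: the full system \eqref{eq:gammas0_slow} is reversible under $(x,S,U)\mapsto(-x,-S,-U)$, with fixed-point set $\Sigma=\{S=0,U=0\}$. This reversibility will be used to close the orbit.

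\textbf{Step 2: transversal fast connections.} The fast trajectories $\varphi_\mathrm{h}^\pm$ cross the non-smooth line $T=1$. The vector field is Lipschitz there, so the flow is still $C^0$, and it is piecewise $C^1$, with the flow map across $\{T=1\}$ being $C^1$ because orbits cross this line transversally ($U\neq0$). This lets us follow $\mathcal{W}^u(\mathcal{C}_\eps^l)$ up to the section $\{T=1\}$, and likewise $\mathcal{W}^s(\mathcal{C}_\eps^r)$.

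In this section we use $P$ as the parameter that breaks the connection. The transversality in Proposition~\ref{prop:hetconn} says that the difference $U_1^+(1)-U_3^-(1)$ has a simple zero in $\bar P$. That simple zero implies the three-dimensional manifolds $\mathcal{W}^u(\mathcal{C}_0^l)$ and $\mathcal{W}^s(\mathcal{C}_0^r)$ intersect transversally in the four-dimensional phase space, along a two-dimensional set parametrised by $S$ near $P=P^\ast(\rho)$. Transversality is an open condition, so the intersection persists for small $\eps$. The perturbed take-off and touchdown curves then lie at $P=P^\ast(\rho)+\mathcal{O}(\eps)$, and the jumps in $S$ across the fast layer are $\mathcal{O}(\eps)$, which follows from integrating $\dot S=\mathcal{O}(\eps)$ over the exponentially converging fast orbit.

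\textbf{Step 3: closing the orbit.} We fix $\mu$ and start on the symmetry section $\Sigma$ at the point of $\psi_0^l$ with $S=0$, perturbed to $\mathcal{C}_\eps^l$. We follow the slow flow on $\mathcal{C}_\eps^l$ until it meets the perturbed take-off curve. The reduced level curve crosses $\mathcal{T}_o^l$ transversally, so this intersection persists by the implicit function theorem. By the exchange lemma, or more simply by Fenichel fibre theory since the slow manifolds have dimension 2 and the jumps only need tracking, the orbit then jumps along the perturbed $\varphi_\mathrm{h}^+$ and lands $\mathcal{O}(\eps)$-close to $(T_r^\ast,\bar S^\mu)$ on $\mathcal{C}_\eps^r$. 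It then follows the perturbed Hamiltonian flow there.

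It remains to hit $\Sigma$ again, which means reaching $S=0$ with $U$ exactly zero. The reduced orbit $\psi_0^r$ crosses $S=0$ transversally, since $S'\neq0$ at its turning point. Because the perturbed slow flow is $\mathcal{O}(\eps)$-close, a crossing of $\{S=0\}$ exists. The condition $U=0$ at that crossing is one equation. We solve it in the one free shooting parameter, namely the $\mathcal{O}(\eps)$-adjustment of the initial point along $\Sigma\cap\mathcal{C}_\eps^l$ transverse to the slow direction. The solvability of this equation uses the transversality from Step 2. Reversibility then reflects this half-orbit into a closed periodic orbit $\Gamma_\eps^\mu$ that is $\mathcal{O}(\eps)$-close to $\Gamma_0^\mu$. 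Letting $\mu$ vary over the admissible range from Proposition~\ref{prop:singsol} gives the family, and a compact $\mu$-range gives a uniform $\eps_0$.

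\textbf{Main obstacle.} The hardest step is Step 2. Fenichel and exchange-lemma arguments are usually stated for smooth vector fields, while here the fast orbits cross the $C^0$-but-not-$C^1$ line $T=1$. My first approach would be to split each fast passage at the section $\{T=1\}$ and work only with smooth flows on either side. I would then show that the transition map across the section is $C^1$: the flow map of a Lipschitz vector field that crosses a hypersurface transversally is piecewise smooth, with a $C^1$ correction coming from the jump in the Jacobian. A $C^1$ transition map is enough to keep the transversality and implicit-function arguments valid. If this turns out to be delicate, a fallback is to replace $\phi$ by a $C^2$ smoothing in an $\mathcal{O}(\delta)$ neighbourhood of $T=1$ and note that every transversality condition above is robust in $\delta$.
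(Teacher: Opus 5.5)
\textbf{Where the proposal stands.} Your overall structure matches the paper's: Fenichel persistence of $\mathcal{C}_0^{l,r}$ and $\mathcal{W}^{s,u}$, persistence of the transversal intersection from Proposition~\ref{prop:hetconn}, and a reversibility argument that shoots from $\Sigma$ near $\mathcal{C}^l_\eps$ to $\Sigma$ near $\mathcal{C}^r_\eps$. Your section-map treatment of the non-smooth line $T=1$ is sound, and it is more careful than the paper's persistence proof, which does not address this line. But $T=1$ is not the main obstacle. The gap is in Step 3.

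\textbf{The orbit never leaves $\mathcal{C}^l_\eps$.} An orbit that starts on $\mathcal{C}^l_\eps$ stays on it for all $x$, because $\mathcal{C}^l_\eps$ is invariant. It cannot ``follow the slow flow until it meets the take-off curve'' and then jump. That describes the singular concatenation, not a solution for $\eps>0$. A true orbit must start \emph{off} $\mathcal{C}^l_\eps$. It has to shadow the slow flow over an $\mathcal{O}(1)$ distance in $x$, i.e.\ an $\mathcal{O}(1/\eps)$ stretch of fast time against an $\mathcal{O}(1)$ unstable rate. So its initial displacement must be exponentially small in $\eps$. Your shooting parameter, an ``$\mathcal{O}(\eps)$-adjustment along $\Sigma\cap\mathcal{C}^l_\eps$ transverse to the slow direction'', therefore fails either way:
\begin{enumerate}
\item A move along the invariant curve $\Sigma\cap\mathcal{C}^l_\eps$ only changes $\mu$, and the orbit still never leaves.
\item An $\mathcal{O}(\eps)$ displacement off the curve makes the orbit leave after fast time $\mathcal{O}(|\log\eps|)$. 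It then leaves essentially at $S=0$ with $P\neq P^\ast(\rho)$, nowhere near the heteroclinic.
\end{enumerate}
The right parameter is an exponentially small displacement $\delta$ inside $\Sigma$ (in $T$, with $S=U=0$). The take-off location is then an \emph{output} of $\delta$, not something you prescribe. Relatedly, starting ``on $\Sigma$ at a point of $\mathcal{C}^l_\eps$'' presupposes that the $U$-component of $\mathcal{C}_\eps$ vanishes at $S=0$. The paper checks this in its step~6; you take it for granted.

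\textbf{The closing equation is not solved by transversality alone.} Even with the right parameter, ``$U=0$ at the $S=0$ crossing is one equation, solvable by the transversality of Step~2'' does not go through. The map $\delta\mapsto U$ is exponentially sensitive on an exponentially thin $\delta$-window and has no nondegenerate $\eps\to0$ limit. A bare appeal to transversality therefore produces no solution; you would need exchange-lemma-type $C^1$ control, which you do not supply. What Step~2 actually gives you is a value $\delta_{\mathrm{het}}$ at which the shooting orbit lies in $\mathcal{W}^s(\mathcal{C}^r_\eps)$. This holds because the mismatch at $T=1$ changes sign as the take-off point sweeps past $P=P^\ast(\rho)$. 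The closing then comes from the saddle structure near $\mathcal{C}^r_\eps$, which is the content of the paper's steps~7--9:
\begin{enumerate}
\item Take $\delta$ on the side of $\delta_{\mathrm{het}}$ where orbits turn back. There, the fast time $\xi_0$ of the turning point $U=0$ after touchdown grows without bound as $\delta\to\delta_{\mathrm{het}}$.
\item Meanwhile, the time $\xi_1$ at which the slow flow along $\mathcal{C}^r_\eps$ crosses $S=0$ stays essentially fixed.
\item Further from $\delta_{\mathrm{het}}$, the orbit turns before $S=0$ is reached.
\end{enumerate}
Continuity in $\delta$ then yields $\xi_0=\xi_1$, i.e.\ a second point on $\Sigma$. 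Reversibility closes the orbit, and repeating this for each $\mu$ gives the family.
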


\begin{proof}
The proof is structured via the following steps:
\begin{enumerate}
    \item The two-dimensional \textbf{critical manifold} $\mathcal{C}_0$ \eqref{eq:C0} is composed of three separate two-dimen\-sional submanifolds: $\mathcal{C}_0^l$, $\mathcal{C}_0^m$ and $\mathcal{C}_0^r$ \eqref{eq:C0_det}. In the case $1/2<\rho<1$ that we consider in this theorem, the submanifolds $\mathcal{C}_0^l$ and $\mathcal{C}_0^r$ are normally hyperbolic and therefore persist for positive $\eps$. That is, for sufficiently small $\eps$, there exist submanifolds $\mathcal{C}_\eps^l$ and $\mathcal{C}_\eps^r$ that are invariant under the flow of \eqref{eq:gammas0_slow}/\eqref{eq:gammas0_fast} and $\mathcal{O}(\eps)$ close to $\mathcal{C}_0^l$ and $\mathcal{C}_0^r$.
    \item The critical submanifolds $\mathcal{C}_0^l$ and $\mathcal{C}_0^r$ each have \textbf{stable and unstable manifolds} $\mathcal{W}^{s,u}(\mathcal{C}_0^l)$ and $\mathcal{W}^{s,u}(\mathcal{C}_0^r)$. The dimensions of $\mathcal{W}^{s,u}(\mathcal{C}_0^l)$ and $\mathcal{W}^{s,u}(\mathcal{C}_0^r)$ are determined by the layer problem \eqref{eq:gammas0_layp}. Since every point of $\mathcal{C}_0^l$ is a saddle in the two-dimensional layer problem \eqref{eq:gammas0_layp}, both $\mathcal{W}^{s}(\mathcal{C}_0^l)$ and $\mathcal{W}^{u}(\mathcal{C}_0^l)$ are three-dimensional. The same holds for $\mathcal{W}^{s}(\mathcal{C}_0^r)$ and $\mathcal{W}^{u}(\mathcal{C}_0^r)$. By the normal hyperbolicity of $\mathcal{C}_0^{l,r}$, these stable and unstable manifolds persist for positive and sufficiently small $\eps$ to $\mathcal{W}^{s,u}(\mathcal{C}_\eps^{l,r})$, which are invariant under the flow of \eqref{eq:gammas0_slow}/\eqref{eq:gammas0_fast} and $\mathcal{O}(\eps)$ close to $\mathcal{W}^{s,u}(\mathcal{C}_0^{l,r})$.
    \item System \eqref{eq:gammas0_slow} is \textbf{symmetric under the reflection} $(x,S,U) \to (-x,-S,-U)$, which is a direct consequence of the translational invariance of the PDE system \eqref{eq:model_specific}. System \eqref{eq:gammas0_fast} has the equivalent reflection symmetry $(\xi,S,U) \to (-\xi,-S,-U)$. Both the reduced problem \eqref{eq:gammas0_redpb} and the layer problem \eqref{eq:gammas0_layp} inherit this reflection symmetry. As a result, the reflection map $R: \mathbb{R}^4 \to \mathbb{R}^4$, $R(P,S,T,U) = (P,-S,T,-U)$ is a bijection between $\mathcal{W}^s(\mathcal{C}_\eps^l)$ and $\mathcal{W}^u(\mathcal{C}_\eps^l)$ and between $\mathcal{W}^s(\mathcal{C}_\eps^r)$ and $\mathcal{W}^u(\mathcal{C}_\eps^r)$, both for $\eps=0$ and $0<\eps \ll 1$. Moreover, $\mathcal{C}_0$ \eqref{eq:C0} is invariant under the reflection $R$.
    \item From Proposition \ref{prop:hetconn}, it follows that $\mathcal{W}^s(\mathcal{C}_0^l)$ \textbf{intersects} $\mathcal{W}^u(\mathcal{C}_0^r)$ transversally; this intersection is two-dimensional. By symmetry, the same holds for the intersection of $\mathcal{W}^u(\mathcal{C}_0^l)$ and $\mathcal{W}^s(\mathcal{C}_0^r)$. The transversality of the intersections implies that their perturbed $\eps>0$ counterparts intersect transversally as well, and that intersection therefore has the same dimension. We denote $\mathcal{J}_\text{back} := \mathcal{W}^s(\mathcal{C}_\eps^l) \cap \mathcal{W}^u(\mathcal{C}_\eps^r)$ and $\mathcal{J}_\text{front} := \mathcal{W}^u(\mathcal{C}_\eps^l) \cap \mathcal{W}^s(\mathcal{C}_\eps^r)$. The reflection map $R$ is a bijection between the two-dimensional manifolds $\mathcal{J}_\text{front}$ and $\mathcal{J}_\text{back}$.
    \item Let $\Sigma := \left\{S=0,\,U=0\right\}$ be the hyperplane that is invariant under the reflection map $R$. By uniqueness of solutions, any solution to \eqref{eq:gammas0_slow} with initial conditions in $\Sigma$ has the property that $P$ and $T$ are even functions in $x$ (and, as a direct consequence, $S$ and $U$ are odd functions in $x$). Therefore, any orbit that intersects $\Sigma$ is \textbf{invariant under the reflection map $R$}. 
    \item Our first goal is to show that \textbf{$\mathcal{C}_\eps$ intersects $\Sigma$}. Note that this is immediately seen to be true for $\eps=0$; however, this intersection is \emph{not} transverse since both $\mathcal{C}_0$ and $\Sigma$ are contained in the hyperplane $\left\{U=0\right\}$. Therefore, we cannot use standard transversality arguments to conclude the persistence of the intersection for $\eps>0$.\\
    For sake of clarity and brevity, we write the vector field \eqref{eq:gammas0_fast} as $(\eps\,S,\eps\,F(P,T),U,G(P,T))$ and assume that $\mathcal{C}_0$ can be represented as a graph over the slow variables $(P,S)$ -- in particular, that the condition $G(P,T) = 0$ gives rise to the equality $T = h(P)$. Note that this is true for every separate branch of $\mathcal{C}_0$ \eqref{eq:C0_det}, see also Figure~\ref{fig:CM_cases}. Writing $\mathcal{C}_0 = \left\{\,(P,S,h(P),0)\,\right\}$, standard Fenichel persistence provides the existence of functions $h_1(P,S;\eps)$ and $j_1(P,S;\eps)$ such that $\mathcal{C}_\eps = \left\{(P,S,h(P) + \eps\,h_1(P,S;\eps),\eps\,j_1(P,S;\eps))\right\}$. We now use the fact that $\mathcal{C}_\eps$ is invariant under the flow; therefore, the vectorfield $(\eps\,S,\eps\,F(P,T),U,G(P,T))$ at $\mathcal{C}_\eps$ should be orthogonal to $\left(T \mathcal{C}_\eps\right)^\perp$. Since
    \begin{displaymath}
        \left(T \mathcal{C}_\eps\right)^\perp = \text{span }\left\{\begin{pmatrix}-\eps \frac{\partial j_1}{\partial P}\\-\eps \frac{\partial j_1}{\partial S}\\0\\1\end{pmatrix},\begin{pmatrix}-h'(P)-\eps\frac{\partial h_1}{\partial P}\\-\eps\frac{\partial h_1}{\partial S}\\1\\0\end{pmatrix}\right\},
    \end{displaymath}
    and
    \begin{displaymath}
        \begin{pmatrix}\eps\,S\\\eps\,F(P,T)\\U\\G(P,T)\end{pmatrix}_{(P,S,T,U)\in\mathcal{C}_\eps} = \begin{pmatrix}\eps\,S\\\eps\,F(P,h(P)+\eps h_1(P,S;\eps))\\\eps j_1(P,S;\eps)\\G(P,h(P)+\eps h_1(P,S;\eps))\end{pmatrix},
    \end{displaymath}
    we find in particular that
    \begin{equation}
        j_1(P,S;\eps) = f'(P)S+ \eps S \frac{\partial h_1}{\partial P} - \eps\,\frac{\partial h_1}{\partial S} F(P,h(P)+\eps\,h_1(P,S;\eps)).
    \end{equation}
    From this equation, we see that $j_1(P,-S;\eps) = - j_1(P,S;\eps)$, from which it follows that $j_1(P,0;\eps) = 0$. We conclude that $\mathcal{C}_\eps \cup \left\{S=0\right\} = \left\{(P,0,h(P)+\eps\,h_1(P,0;\eps),\eps\,j_1(P,0;\eps)\right\} \subset \Sigma$.
    \item If a slow-fast periodic $R$-symmetric orbit exists that is close to the singular skeleton, its slow parts lie (exponentially) close to $\mathcal{C}_\eps^l$ and $\mathcal{C}_\eps^r$. Therefore, we take a \textbf{line segment of initial conditions} $A^l \subset \Sigma$, $A_l = \left\{(P_\eps,0,T_\eps,u), u_- < u < u_+\right\}$ with $u_-<0<u_+$ such that $A^l \cap \mathcal{C}_\eps^l =: (P^l_\eps,0,T^l_\eps,0)$ is nonempty and that $(P^l_0,0,T^l_0,0) \in A^l$ for $\eps=0$, where $(P^l_0,0,T^l_0,0)$ is the point where the singular skeleton intersects $\Sigma$.\\
    We study the forward flow of $A^l$ under the full system \eqref{eq:gammas0_fast}. We define the two-dimensional manifold $Z^l$ as the union of all forward orbits with initial conditions in $A_l$. Defining $\gamma(\xi;u)$ as the forward orbit of the initial condition $(P_\eps,0,T_\eps,u) \in A^l$, we can write $Z^l := \bigcup_{u_- < u < u_+} \gamma(\xi;u)$, yielding a natural fibration of $Z^l$. By choosing $|u_\pm|$ sufficiently small, we can ensure that $Z^l$ intersects a neighbourhood of the take-off point $(P_*,S_*,0,0)$.\\
    Due to the normal hyperbolicity of $\mathcal{C}_\eps$, the forward flow of $A^l$ is exponentially contracted towards $\mathcal{W}^u(\mathcal{C}_\eps^l)$. In fact, near $C_\eps^l$, the fast flow is determined by the `left' saddle of the layer problem \eqref{eq:gammas0_layp}. Hence, the fibres of $Z^l$ that remain close to the part of $\mathcal{W}^u(\mathcal{C}_\eps^l)$ may intersect $\mathcal{W}^s(\mathcal{C}_\eps^r)$, i.e. those that `take off' from $\mathcal{C}_\eps^l$ in the direction of positive $T$, are parametrised by $0<u<u_+$; we write $Z^l_+ := \bigcup_{0 < u < u_+} \gamma(\xi;u)$.
    \item Since $Z_l$ intersects a neighbourhood of the take-off point $(P_*,S_*,0,0)$, it follows that $Z^l_+$ remains close to the intersection $\mathcal{W}^u(\mathcal{C}_\eps^l) \cup \mathcal{W}^s(\mathcal{C}_\eps^r)$. In fact, we can use the layer problem \eqref{eq:gammas0_layp} to track the forward evolution of the leading projection of $Z^l_+$ onto $(T,U)$ under the fast flow, \textbf{along the heteroclinic connection}.\\
    We make two observations. First, from the flow of the layer problem, we see that every orbit in $Z^l_+$ transversally intersects the hyperplane $\left\{U=0\right\}$. Furthermore, as $u \downarrow 0$, the orbits $\gamma(\xi;u)$ converge to the heteroclinic orbit, which is forward asymptotic to $\mathcal{C}^r_\eps$ as it is contained in $\mathcal{W}^s(\mathcal{C}_\eps^r)$. Hence, the value $\xi_0$ for which $\gamma(\xi_0;u) \cup \left\{U=0\right\} \neq \emptyset$ can be made arbitrarily large by choosing $u$ arbitrarily small.\\
    The second consequence of $Z^l_+$ being close to $\mathcal{W}^s(\mathcal{C}_\eps^r)$ is that the orbits $\gamma(\xi,u) \in Z^l_+$ follow the slow flow on $\mathcal{C}_\eps^r$ \emph{for some time} after the the jump through the fast field. In other words, there are $\xi_1$ and $\xi_2$ such that $\gamma(\xi,u)$ is exponentially close to $\mathcal{C}_\eps^r$ for $\xi_{\text{td}} < \xi < \xi_{\text{to}}$; moreover, we can make $\xi_{\text{to}}$ arbitrarily large by choosing $u$ arbitrarily small.\\ 
    This allows us to identify two scenarios for orbits contained in $Z^l_+$. First, we choose $u$ sufficiently small, so that $\gamma(\xi;u)$ closely follows the slow flow on $\mathcal{C}^r_\eps$ through the hyperplane $\left\{S=0\right\}$; we define $\xi_1$ to be the $\xi$-value for which $\gamma(\xi_2;u) \cap \left\{S=0\right\} \neq \emptyset$. Then, we can further choose $u$ sufficiently small such that the intersection with the hyperplane $\left\{U=0\right\}$ takes place \emph{after} the intersection with $\left\{S=0\right\}$ -- that is, $\xi_0 > \xi_1$. Second, due to the saddle structure of the fast field near $\mathcal{C}^r_\eps$, we see that the amount of time spent following the slow flow on $\mathcal{C}^r_\eps$, i.e. $\xi_{\text{to}}-\xi_{\text{td}}$ is monotonic in $u$. As a consequence, we can choose $u$ sufficiently large such that the orbit $\xi(\xi;u)$ takes off from (a neighbourhood of) $\mathcal{C}^r_\eps$ \emph{before} the hyperplane $\left\{S=0\right\}$ is intersected -- that is, $\xi_0 < \xi_1$. 
    \item By \textbf{continuity of $\gamma(\xi;u)$ in $u$}, there exists an $u_* \in (0,u_+)$ such that $\xi_0 = \xi_1$, that is, $\gamma(\xi,u_*)$ intersects the reflection hyperplane $\Sigma$ near $\mathcal{C}_\eps^r$. By the reflection symmetry of the full system \eqref{eq:gammas0_fast}, it follows that $\gamma(\xi;u_*)$ is periodic and by construction close to the singular skeleton.
\end{enumerate}
\end{proof}

Note that points 7, 8 and 9 of the proof of Theorem \ref{thm:po_existence} also occur in the proof of \cite[Theorem 3.4]{NDV.2026}.

\subsection{Fast toxicity diffusion}
While our choice to take $0<\eps\ll1$ small was motivated by the parameter choices taken in \cite{GIS_2026}, the case where the toxicity diffusion rate is \emph{large} in comparison to the biomass diffusion rate also falls within the scope of GSPT analysis. Interestingly, this converse asymptotic limit allows for the existence of spike patterns. We sketch the setup of the analysis below, but refrain from going into details, as this is beyond the scope of the current paper.\\

We consider system \eqref{eq:model_specific_dynsys_inP} and assume $\eps \gg 1$. We introduce $0<\delta = 1/\eps \ll 1$, which makes system \eqref{eq:model_specific_dynsys_inP} singularly perturbed. The associated critical manifold $\mathcal{M}_0$ is found in the limit $\delta \downarrow 0$:
\begin{equation}\label{eq:slowtox_C0}
    \hat{\mathcal{M}}_0 := \left\{S = 0,\; \frac{P}{1 - \rho\,\phi(T)}\left[d+s\,\phi(T) - \left(g - \gamma\,\phi(T)\right)\left(1-\frac{P}{1 - \rho\,\phi(T)}\right)\right] = 0\,\right\}.
\end{equation}
We see that $\hat{\mathcal{M}}_0$ can be written as the union of two manifolds, $\hat{\mathcal{M}}_0 = \hat{\mathcal{M}}_1 \cup \hat{\mathcal{M}}_2$,
with
\begin{equation}
 \hat{\mathcal{M}}_1 := \left\{ S = 0,\,P = 0\right\}
\end{equation}
and
\begin{equation}
    \hat{\mathcal{M}}_2 := \left\{S=0,\,d+s\,\phi(T) - \left(g - \gamma\,\phi(T)\right)\left(1-\frac{P}{1 - \rho\,\phi(T)}\right) = 0\right\}.
\end{equation}
On $\hat{\mathcal{M}}_2$, $P$ can be written as a function of $T$:
\begin{equation}\label{eq:slowtox_C0_PinT}
    P(T) = \left(1-\rho\,\phi(T)\right) \left(1-\frac{d + s\, \phi(T)}{g-\gamma\,\phi(T)}\right)= \left\{ \begin{array}{rcl} \left(1-\rho\,T\right) \left(1-\frac{d + s\, T}{g-\gamma\,T}\right) & \text{if} & T<1,\\ \left(1-\rho\right) \left(1-\frac{d + s}{g-\gamma}\right) & \text{if} & T>1.\end{array}\right.
\end{equation}
Depending on the value of $d$, $g$, $s$ and $\gamma$, the branch $\hat{\mathcal{M}}_2$ can cross the trivial branch $\hat{\mathcal{M}}_1$, generically leading to an exchange in stability. When (part of) the $\hat{\mathcal{M}}_2$-branch is located in the biologically feasible $T>0$, $P>0$ quadrant, the associated layer problem will admit a homoclinic solution. This fast homoclinic can be used to construct periodic spike-type patterns, which can be analysed along the lines of \cite{DoelmanVeerman.2015,dRDR.2016}. Previous analysis \cite{DoelmanVeerman.2015} suggests that, for such spike-type patterns to be observable, the parameter $h$ should be asymptotically large in $\delta$, i.e. $h = h_0/\delta$.
The existence of localised pulses follows from \cite[Theorem 2.1]{DoelmanVeerman.2015}; the existence of periodic pulse patterns follows from \cite[Theorem 2.11]{dRDR.2016}. Note that both references also provide the necessary theory to establish the stability of these spike patterns.




\section{Periodic multiscale patterns in the general model 
}\label{sec:patterns_general}

We consider the general model \eqref{eq:model_class}. Using the vector notation $\gamma := (\gamma_1,\gamma_2)$, $\eta := (\eta_1,\eta_2)$ and $\xi(R,T) := \left(\xi_1(R,T),\xi_2(R,T)\right)$, we first write system \eqref{eq:model_class} as
\begin{subequations}\label{eq:model_class_innerproducts}
    \begin{align}
    \partial_t R &= d_{R_1}\Delta R - (d_{R_1}-d_{R_2}) \Delta \xi_2(R,T) + (\hat{R}-R)\left\langle \gamma, \xi(R,T) \right\rangle,\\
    \partial_t T &= d_T \Delta T + \mu \langle \eta,\xi(R,T) \rangle - k\,T,
    \end{align}
\end{subequations}
using the definitions of the nonlinearities \eqref{eq:model_class_nonlinearities}, where $\langle \cdot,\cdot \rangle$ is the usual inner product in $\mathbb{R}^2$. Then, we rescale space $x \to \sqrt{d_{R_1}} x$ and time $t \to t/k$, and subsequently define the parameters
\begin{equation}
    \eps^2 := \frac{d_T}{k d_{R_1}} , \quad \hat{\mu} := \frac{\mu}{k}, \quad \sigma := \frac{d_{R_2}}{d_{R_1}}
\end{equation}
to obtain
\begin{subequations}\label{eq:model_class_innerproducts_rescaled}
    \begin{align}
    \partial_t R &= \Delta R - (1-\sigma) \Delta \xi_2 + (\hat{R}-R)\langle \gamma, \xi(R,T) \rangle,\\
    \partial_t T &= \eps^2 \Delta T + \hat{\mu} \langle \eta,\xi(R,T) \rangle - T.
    \end{align}
\end{subequations}
We introduce
\begin{equation}\label{eq:model_class_def_P}
    P := R - (1-\sigma) \xi_2(R,T) = R - (1-\sigma) R \frac{f(T)}{f(T)+g(T)} = R \frac{\sigma f(T)+ g(T)}{f(T)+g(T)}
\end{equation}
to facilitate the upcoming analysis. Furthermore, we introduce the rescaled interaction functions $\hat{\xi}_i := \xi_i/R$. Note that $\hat{\xi}_1 + \hat{\xi}_2 = 1$, and that $\hat{\xi}_{1,2}$ are functions of $T$ only (cf.~\eqref{eq:model_class_nonlinearities}). Moreover, note that comparing \eqref{eq:model_class_innerproducts_rescaled} with the equivalently rescaled model \eqref{eq:model_specific_rescaled}, we see that $\rho = 1-\sigma$; the assumption $d_{R_1} > d_{R_2}$ is equivalent to $0<\sigma<1$. All other system parameters $\gamma_{1,2}$, $\eta_{1,2}$ and $\hat{\mu}$ are assumed to be nonnegative \cite{Morgan_etal.2025}.\\

We study stationary solutions to \eqref{eq:model_class_innerproducts_rescaled}, and consider one unbounded space dimension, i.e. $x \in \mathbb{R}$. These stationary solutions obey the system of ODEs
\begin{subequations}
    \begin{align}
        P'' &= R\langle \eta, \hat{\xi}(T) \rangle - R(\hat{R}-R) \langle \gamma, \hat{\xi}(T) \rangle,\\
        \eps^2 T'' &= T- \hat{\mu} R \langle \eta,\hat{\xi}(T) \rangle,
    \end{align}
\end{subequations}
cf.~\eqref{eq:model_class_def_P}. Substituting $R = \dfrac{P}{\langle(1,\sigma),\hat{\xi}(T)\rangle}$, we obtain
\begin{subequations}\label{eq:model_class_innerproducts_stationary_P}
    \begin{align}
        P'' &= P\left[\frac{\langle \eta, \hat{\xi}(T) \rangle}{\langle (1,\sigma), \hat{\xi}(T) \rangle} - \frac{\langle \gamma, \hat{\xi}(T) \rangle}{\langle (1,\sigma), \hat{\xi}(T) \rangle}\left(\hat{R} - \frac{P}{\langle(1,\sigma),\hat{\xi}(T)\rangle}\right)\right],\\
        \eps^2 T'' &= T- \hat{\mu} P \frac{\langle \eta, \hat{\xi}(T) \rangle}{\langle (1,\sigma), \hat{\xi}(T) \rangle}.
    \end{align}
\end{subequations}
These equations can be written as a four-component dynamical system, yielding
\begin{subequations}\label{eq:model_class_innerproducts_stationary}
    \begin{align}
        P' &= S,\\
        S' &= P\left[\frac{\langle \eta, \hat{\xi}(T) \rangle}{\langle (1,\sigma), \hat{\xi}(T) \rangle} - \frac{\langle \gamma, \hat{\xi}(T) \rangle}{\langle (1,\sigma), \hat{\xi}(T) \rangle}\left(\hat{R} - \frac{P}{\langle(1,\sigma),\hat{\xi}(T)\rangle}\right)\right],\\
        \eps\,T' &= U,\\
        \eps\,U' &= T- \hat{\mu} P \frac{\langle \eta, \hat{\xi}(T) \rangle}{\langle (1,\sigma), \hat{\xi}(T) \rangle},
    \end{align}
\end{subequations}
cf. \eqref{eq:model_specific_dynsys_inP} / \eqref{eq:gammas0_slow}.\\ 

We consider system \eqref{eq:model_class} in the case of slow toxicity diffusion, i.e. $d_T \ll d_{R_1}$, which means that $0<\eps \ll 1$. Our goal is to determine whether system \eqref{eq:model_class_innerproducts_stationary} admits periodic double front type solutions as those constructed for system \eqref{eq:gammas0_slow} in section \ref{sec:patterns_specific} and described in more detail in Proposition \ref{prop:singsol} and Theorem \ref{thm:po_existence}.\\

To establish the existence of periodic double font type solutions in the general model \eqref{eq:model_class_innerproducts_stationary}, an analysis along the same lines as those presented in Section \ref{sec:patterns_specific} can be carried out. The goal of this section is not to carry out this analysis in full detail; rather, it is to identify \emph{necessary} conditions for periodic double front patterns to occur. In particular, the geometry of the critical manifold associated to system \eqref{eq:model_class_innerproducts_stationary} will play a crucial role.

\subsection{The geometry of the critical manifold}
The critical manifold $\mathcal{C}_0$ that structures the dynamics of system \eqref{eq:model_class_innerproducts_stationary} is defined as
\begin{equation}
    \mathcal{C}_0 := \left\{\,T = \hat{\mu} P \frac{\langle \eta, \hat{\xi}(T) \rangle}{\langle (1,\sigma), \hat{\xi}(T) \rangle} ,\;U = 0\,\right\}.
\end{equation}
For the ecologically relevant case $T>0$, the projection of $\mathcal{C}_0$ onto the $(T,P)$-plane is a graph of the function
\begin{equation}\label{eq:P(T)}
    P(T) = \frac{T}{\hat{\mu}}\frac{\langle (1,\sigma), \hat{\xi}(T) \rangle}{\langle \eta, \hat{\xi}(T) \rangle} = \frac{T}{\hat{\mu}} \frac{\sigma f(T)+g(T)}{\eta_2 f(T)+\eta_1 g(T)}.
\end{equation}
Hence, the shape of the graph of $P(T)$ determines the geometry of $\mathcal{C}_0$. In particular, the monotonicity properties of $P(T)$ determine whether front-type patterns can occur in system \eqref{eq:model_class_innerproducts_stationary_P}. The reason for this is the intimate relation between the geometry of $\mathcal{C}_0$ and the equilibria of the layer problem
\begin{subequations}\label{eq:general_layp}
    \begin{align}
        \dot{T} &= U,\\
        \dot{U} &= T- \hat{\mu} P \frac{\langle \eta, \hat{\xi}(T) \rangle}{\langle (1,\sigma), \hat{\xi}(T) \rangle}.
    \end{align}
\end{subequations}
The number of equilibria of \eqref{eq:general_layp} for fixed $P = P^\ast$ is determined by the number of solutions to the equation $P(T) = P^\ast$ -- that is, the number of times the graph of $P(T)$ \eqref{eq:P(T)} intersects the horizontal line $P = P^\ast$. Moreover, due to the Hamiltonian structure of the layer problem \eqref{eq:general_layp}, the nature of those equilibria is directly determined by the sign of the intersection of the graph of $P(T)$ with the line $P = P^\ast$. That is, given $T^\ast$ is such that $P(T^\ast) = P^\ast$, then the associated equilibrium in \eqref{eq:general_layp} is a saddle if and only if $\frac{\text{d} P}{\text{d}T}(T^\ast) > 0$ and a centre if and only if $\frac{\text{d} P}{\text{d}T}(T^\ast) < 0$. Hence, a heteroclinic orbit in the layer problem can only exist when there are \emph{two} distinct values $T^\ast_\pm$ for which $P(T^\ast_\pm) = 0$ and $\frac{\text{d} P}{\text{d} T}(T^\ast_\pm) > 0$. \\




Since both $f,\, g : [0,\infty) \to [0,\infty)$ and $f$ is monotonically increasing, while $g$ is monotonically decreasing (cf. section \ref{sec:model_class}), we can directly infer that $P(T) > 0$ for $T>0$ and that $P(0) = 0$. Moreover, we see that $\frac{\text{d} P}{\text{d} T}(0) = \frac{1}{\hat{\mu}} \frac{\langle \eta,\hat{\xi}(0)\rangle}{\langle (1,\sigma),\hat{\xi}(0)\rangle} > 0$ and that $\lim_{T \to \infty} \frac{\text{d} P}{\text{d} T} = \frac{\sigma}{\hat{\mu}\eta_2} > 0$ \footnote{Here, we implicitly assume that $g \to 0$ or $f$ is unbounded as $T \to \infty$. If both $\lim_{T \to \infty}g(T) := g_\infty > 0$ and $\lim_{T \to \infty}f(T) := f_\infty < \infty$, then the limit value of $P'$ is $\frac{\sigma f_\infty+g_\infty}{\hat{\mu}(\eta_2 f_\infty + \eta_1 g_\infty}>0$.}. Hence, for any $P^\ast >0$, the graph of $P(T)$ intersects the line $P = P^\ast$ at least once. In addition, by continuity (since both $f$ and $g$ are assumed to be piecewise $C^1$), the graph of $P(T)$ intersects the line $P = P^\ast$ at least twice with positive derivative if and only if there is an open $T$-interval for which $P(T)$ is decreasing, i.e. where $\frac{\text{d}P}{\text{d}T} < 0$. Hence, the existence of a heteroclinic orbit in the layer problem \eqref{eq:general_layp} is only possible when $\frac{\text{d}P}{\text{d}T} < 0$ on some open $T$-interval; see Figure \ref{fig:CM_cases} for an illustration of this observation for the specific model \eqref{eq:model_specific}.\\

To establish the existence of such a $P$-decreasing interval, we assume that $g(T) > 0$, i.e. that $g(T)$ is nontrivial on an open $T$-interval. Indeed, when $g(\hat{T}) = 0$ for some $\hat{T}$, then $g(T) = 0$ for all $T \geq \hat{T}$ by the monotonicity of $g$; this means that $P(T) = T \frac{\sigma}{\hat{\mu} \eta_2}$ for all $T \geq \hat{T}$, and therefore $P'(T)>0$ for all $T \geq \hat{T}$.\\
Next, we introduce $h(T) = \frac{f(T)}{g(T)}$ and rewrite $P(T)$ as $P(T) = \frac{T}{\hat{\mu}} \frac{1+\sigma h(T)}{\eta_1+\eta_2 h(T)}$. We find
\begin{align*}
    \hat{\mu}  \, P'(T) &= \frac{1+\sigma h(T)}{\eta_1+\eta_2 h(T)} + T \left(\frac{\sigma h'(T)}{\eta_1+\eta_2 h(T)} - \eta_2 h' \frac{1+\sigma h(T)}{(\eta_1+\eta_2 h(T))^2}\right)\\
    &= \frac{1}{\eta_1+\eta_2 h(T)} \left[1 + \sigma h(T) +T h'(T)\frac{\eta_1\sigma -\eta_2}{\eta_1+\eta_2 h(T)} \right],
\end{align*}
which is negative if and only if 
\begin{equation}\label{eq:condition_hprime}
    1 + \sigma h+T h'(T)\frac{\eta_1\sigma -\eta_2}{\eta_1+\eta_2 h(T)}<0.
\end{equation}
Since $f'(T) \geq 0$ and $g'(T) \leq 0$, we know that $h'(T) \geq 0$, which means that \eqref{eq:condition_hprime} can hold only if $\eta_1\sigma - \eta_2 < 0$. Assuming therefore that $\eta_1\sigma - \eta_2 < 0$, we can reformulate \eqref{eq:condition_hprime} as a differential inequality for $h(T)$, yielding
\begin{displaymath}
    h'(T) < -\frac{1}{T} \frac{(\eta_1+\eta_2 h(T))(1+\sigma h(T))}{\eta_1\sigma -\eta_2}.
\end{displaymath}
Writing $h(T) = \frac{1}{k(T)} - \frac{\eta_1}{\eta_2}$, this differential inequality can be made linear, yielding
\begin{displaymath}
    k'(T) < -\frac{\eta_2-\eta_1 \sigma}{T}\left(\eta_2 \sigma + (\eta_2-\eta_1 \sigma)k(T)\right),
\end{displaymath}
Gr{\"o}nwall's Lemma now provides the estimate
\begin{displaymath}
    T^{(\eta_2-\eta_1 \sigma)^2}\left(\frac{\eta_2 \sigma}{\eta_2-\eta_1 \sigma} + k(T)\right) < T_-^{(\eta_2-\eta_1 \sigma)^2}\left(\frac{\eta_2 \sigma}{\eta_2-\eta_1 \sigma} + k(T_-)\right),
\end{displaymath}
where $T_-$ is the left limit of the interval where $P(T)$ is decreasing. Reverting to $h(T)$, we obtain
\begin{displaymath}
    h(T) < \frac{(T/T_-)^{(\eta_2-\eta_1 \sigma)^2} \frac{\eta_1 + \eta_2 h(T_-)}{1+\sigma h(T_-)}-\eta_1}{(T/T_-)^{(\eta_2-\eta_1 \sigma)^2} \sigma\frac{\eta_1 + \eta_2 h(T_-)}{1+\sigma h(T_-)}-\eta_2},
\end{displaymath}
which means in terms of the original model functions $f$ and $g$ that
\begin{equation}\label{eq:fg_inequality}
    \frac{f(T)}{g(T)} < \frac{(T/T_-)^{(\eta_2-\eta_1 \sigma)^2} \frac{\eta_1 g(T_-) + \eta_2 f(T_-)}{g(T_-)+\sigma f(T_-)}-\eta_1}{(T/T_-)^{(\eta_2-\eta_1 \sigma)^2} \sigma\frac{\eta_1 g(T_-) + \eta_2 f(T_-)}{g(T_-)+\sigma f(T_-)}-\eta_2}.
\end{equation}
The outcome of the above analysis, and its consequences for the existence of front type patterns in system \eqref{eq:model_class_innerproducts}, is summarised in the following Result:

\begin{res}\label{res:patterns_general_necessaryconditions} Let $f,g : [0,\infty) \to [0,\infty)$ be piecewise $C^1$; assume that $f$ is monotonically increasing and $g$ is monotonically decreasing.
\begin{enumerate}
    \item If either 
    \begin{itemize}
        \item $\eta_1\sigma > \eta_2$, or
        \item $\eta_1\sigma < \eta_2$ but there does not exist an interval $(T_-,T_+) \subset (0,\infty)$ such that for $T \in (T_-,T_+) $, $f$ and $g$ obey the inequality \eqref{eq:fg_inequality},
    \end{itemize}
    then the graph of $P(T)$ \eqref{eq:P(T)} is nondecreasing, hence system \eqref{eq:model_class_innerproducts} does \emph{not} admit stationary front-type patterns.
    \item If $\eta_1\sigma < \eta_2$ and there exists an interval $(T_-,T_+) \subset (0,\infty)$ such that for $T \in (T_-,T_+) $, $f$ and $g$ obey the inequality \eqref{eq:fg_inequality}, then the graph of $P(T)$ is decreasing on $(T_-,T_+)$; hence, system \eqref{eq:model_class_innerproducts} may admit stationary front-type patterns.
\end{enumerate}
    
\end{res}

 \subsection{Stationary front-type patterns for canonical switching rates}
 We investigate whether stationary front-type patterns can exist for the canonical choices for the switching rate functions $f$ and $g$ as listed in section \ref{sec:model_class}, namely power functions \eqref{eq:fg_choices_powerfct}, Holling-type II functions \eqref{eq:fg_choices_HollingII} and saturation functions \eqref{eq:fg_choices_sattrans}.

 \subsubsection{Saturation functions} For $f$ and $g$ as in \eqref{eq:fg_choices_sattrans}, we find that 
 \begin{equation}
  P_\text{sat}(T) = \left\{\begin{array}{rcl} \dfrac{T}{\hat{\mu}}\dfrac{1-\frac{T}{\hat{T}}(1-\sigma)}{\eta_1 + \frac{T}{\hat{T}}(\eta_2-\eta_1)} & \text{if} & T < \hat{T},\\ \dfrac{ \sigma T}{\hat{\mu}\eta_2} & \text{if} & T > \hat{T} .   
  \end{array}\right.    \,
 \end{equation}
 The derivative of $P_\text{sat}$ can be calculated directly, from which it follows that there is an isolated, positive $T$-interval where $P_\text{sat}'(T) <0$ when $\eta_{1,2}>0$ and $0 \leq \sigma \eta_1 < (1-\sigma)\eta_2$; this interval is given by
\begin{displaymath}
    T/\hat{T} \in \left(\frac{1}{\frac{\eta_2}{\eta_1}-1} \left[-1+\sqrt{1+\frac{\frac{\eta_2}{\eta_1}-1}{1-\sigma}}\right],1\right).
\end{displaymath}
Note that the saturation function choice of $f$ and $g$ as in \eqref{eq:fg_choices_sattrans} produces --barring some simplifying assumptions-- the specific model \eqref{eq:model_specific} that was studied in section \ref{sec:patterns_specific}. In this context, the condition $0 \leq \sigma \eta_1 < (1-\sigma)\eta_2$ is equivalent to $1/2 < \rho < 1$ (see Proposition \ref{prop:hetconn}), where the equivalent $P$-decreasing interval is given by $\left(1/(2\rho),1\right)$; see also Figure \ref{fig:CM_cases}, recalling the definition of $\mathcal{C}_0^m$ \eqref{eq:C0_det}. The detailed analysis carried out in Section \ref{sec:patterns_specific} and culminating in Theorem \ref{thm:po_existence} provides conditions for which periodic front-type patterns are rigorously shown to exist, strengthening --and going well beyond-- statement 2 of Result \ref{res:patterns_general_necessaryconditions}.

\subsubsection{Power functions} For $f$ and $g$ as in \eqref{eq:fg_choices_powerfct}, we find that 
\begin{equation}\label{eq:PpowerT}
    P_\text{power}(T) = \frac{T}{\hat{\mu}} \frac{1+\sigma T^p(1+T)^q}{\eta_1 + \eta_2 T^p (1+T)^q},
\end{equation}
 with $p,q > 0$.
 The derivative of $P_\text{power}$ can be calculated directly, from which it follows that there is an isolated, positive $T$-interval where $P_\text{sat}'(T) <0$ if and only if there exist (at least) two positive $T$-values for which
\begin{align*}
 (1+T) \eta_1 + F(T)\left[(1-p)\eta_2 + (1+p)\eta_1 \sigma + \left((1+p+q)\eta_1 \sigma +(1-p-q)\eta_2\right)T\right]\\ + \eta_2 \sigma (1+T) F(T)^2 = 0,
\end{align*}
i.e.
\begin{equation}\label{model_class_powerfct_condition_eq}
 F(T) = T^p (1+T)^q  = \frac{a + b T}{2 \eta_2 \sigma (1+T)}\left[-1 \pm \sqrt{1-\frac{4\eta_1 \eta_2 \sigma(1+T)^2}{(a+b T)^2} }\right],
\end{equation}
with $a = (1-p)\eta_2 + (1+p)\eta_1 \sigma$ and $b=(1+p+q)\eta_1 \sigma +(1-p-q)\eta_2$. If $a+b T > 0$, then $-1 \pm \sqrt{1-\frac{4\eta_1 \eta_2 \sigma(1+T)^2}{(a+b T)^2} }<0$ so \eqref{model_class_powerfct_condition_eq} does not have solutions for positive $T$. Hence, at least one of $a$, $b$ must be negative.\\
Writing $y(T) = \frac{a + b T}{2 \eta_1 \sigma(1+T)}$, we see that $y(T) \to a/(2 \eta_1 \sigma)$ as $T \downarrow 0$ and $y(T) \to b/(2\eta_1 \sigma)$ as $T \to \infty$. In addition, the right hand side of \eqref{model_class_powerfct_condition_eq}, which is expressed in terms of $y(T)$ as $y(T) \left[-1 \pm \sqrt{1-\frac{\eta_2}{\eta_1 \sigma} y(T)^{-2}}\right]$, is positive if and only if $y(T)<0$ and the $\pm$ sign is chosen to be negative.\\
Since $a<0$ or $b<0$, there are three possible combinations to explore. We first consider the case $a<0$ and $b>0$. Since $b = a + q(\eta_1 \sigma - \eta_2)$, this means that $\eta_1 \sigma - \eta_2 > 0$. However, $a = (\eta_1 \sigma -\eta_2 )p + \eta_1 \sigma + \eta_2 > 0$, which leads to a contradiction. Therefore, we find that necessarily $b<0$. Since $b = \eta_1 \sigma + \eta_2 + (p+q)(\eta_1 \sigma - \eta_2)$, we see that $b$ is negative only if 
\begin{displaymath}
\sigma\eta_1 < \eta_2,    
\end{displaymath}
a necessary condition for the solvability of \eqref{model_class_powerfct_condition_eq}; see also Result \ref{res:patterns_general_necessaryconditions}. The existence of a solution pair for \eqref{model_class_powerfct_condition_eq}, which yields an open positive $S$-interval for which $P_\text{power}'(T)<0$, is now a matter of choosing $\eta_{1,2}$, $\sigma$ and $p$, $q$ appropriately. We refrain from going into a detailed analysis here, but observe that the choice $p=1$, $q=2$, $\eta_1=1$, $\eta_2 = 2$ and $\sigma = \frac{1}{20}$ yields a nontrivial positive solution pair to \eqref{model_class_powerfct_condition_eq}; see also Figure \ref{fig:front_powerfct_pars}.\\

\begin{figure}
    \begin{minipage}{.4\textwidth}
    \centering
    \begin{overpic}[width=1.2\textwidth]{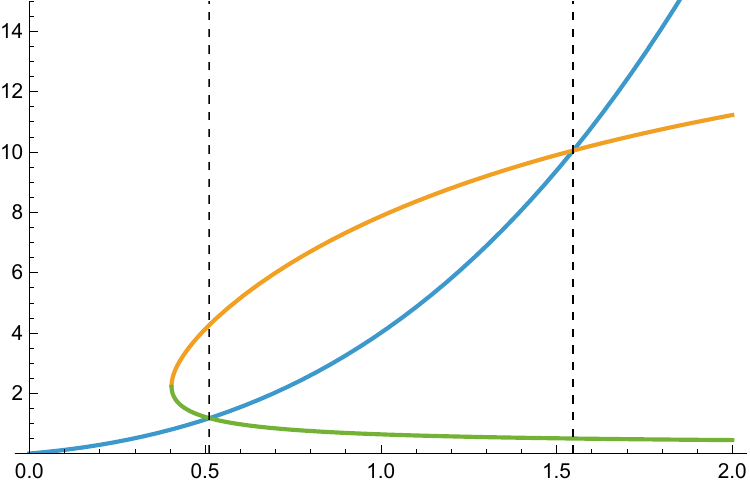}
    \put(102,3){$T$}
    \end{overpic}\\
    (a)
    \end{minipage}
    \hspace{1.5cm}
    \begin{minipage}{.4\textwidth}
    \centering
    \begin{overpic}[width=1.2\textwidth]{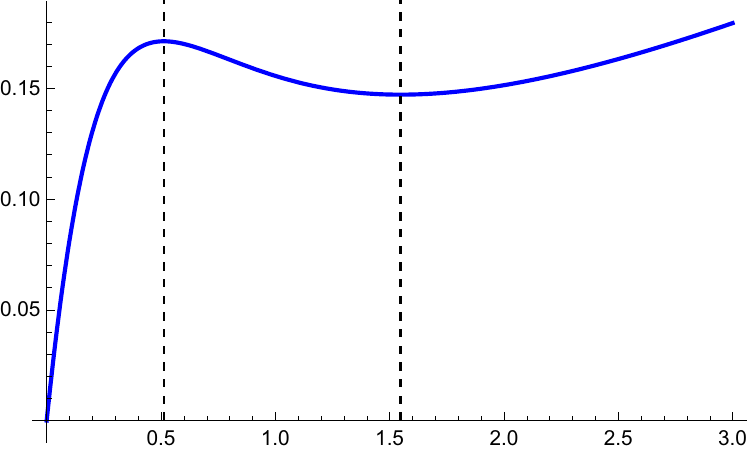}
    \put(103,3){$T$}
    \put(4,62){$P$}
    \put(70,40){\textcolor{blue}{$P_\text{power}(T)$}}
    \end{overpic}\\
    (b)
    \end{minipage}
    \caption{(a) The left- and right hand side of \eqref{model_class_powerfct_condition_eq} are plotted in blue resp. orange (+) and green (-) for $p=1$, $q=2$, $\eta_1 = 1$, $\eta_2 = 2$ and $\sigma = 1/10$. (b) The profile of the critical manifold determined $P_\text{power}(T)$ \eqref{eq:PpowerT} plotted for the same parameter values. The interval where $P_\text{power}'(T)<0$, numerically approximated as $0.5125 < T < 1.5476$, can clearly be observed.}
    \label{fig:front_powerfct_pars}
\end{figure}

\subsubsection{Holling-type II functions}
We take the general form of the Holling-type II functions \eqref{eq:fg_choices_HollingII_rescaled}. Since $c_g>0$, we can eliminate one parameter by rescaling $c_g T =: \hat{T}$, yielding
    \begin{subequations}
        \begin{align}
         \hat{f}(\hat{T}) &= f_0 + \frac{\hat{d}_f \hat{T}}{\hat{c}_f \hat{T}+1},\\
         \hat{g}(\hat{T}) &= g_0 - \frac{\hat{d}_g \hat{T}}{\hat{T}+1},
        \end{align}
    \end{subequations}
with $\hat{c}_f := c_f/c_g$, $\hat{d}_f := d_f/c_g$ and $\hat{d}_g := d_g/c_g$. Note that the codomain $[0,\infty)$ of $\hat{g}$ implies that $\hat{d}_g \leq g_0$.\\
The condition
\begin{displaymath}
    \frac{\text{d}}{\text{d} T} P_\text{Holling}(T) = \frac{\text{d}}{\text{d} \hat{T}} \hat{T} \frac{\sigma \hat{f}(\hat{T}) +\hat{g}(\hat{T})}{\eta_2 \hat{f}(\hat{T}) + \eta_1\hat{g}(\hat{T})} < 0
\end{displaymath}
yields a condition on a fourth order polynomial of the form
\begin{equation}\label{eq:HollingII_quartic_cond}
    \hat{a}_4 \hat{T}^4 + \hat{a}_3 \hat{T}^3 + \hat{a}_2 \hat{T}^2 + \hat{a}_1 \hat{T} + \hat{a}_0 < 0,
\end{equation}
with coefficients
\begin{subequations}
    \begin{align}
    \hat{a}_4 &= \left(\eta_1 \hat{c}_f(g_0 - \hat{d}_g) + \eta_2 (\hat{c}_f f_0 + \hat{d}_f)\right) \left(\sigma(\hat{c}_f f_0 + \hat{d}_f) + \hat{c}_f(g_0 - \hat{d}_g)\right),\\
    \hat{a}_3 &= 2 \left(\eta_1(g_0 - \hat{d}_g + \hat{c}_f g_0) +\eta_2(f_0 + \hat{d}_f + \hat{c}_f f_0)\right)\left(\sigma (\hat{c}_f f_0 + \hat{d}_f) +\hat{c}_f(g_0- \hat{d}_g)\right),\\
    \hat{a}_1 &= 2 \left(\eta_1 g_0 + \eta_2 f_0\right)\left(\sigma(f_0 + \hat{d}_f) + g_0 - \hat{d}_g + \hat{c}_f(\sigma f_0 + g_0)\right),\\
    \hat{a}_0 &= \left(\eta_1 g_0 + \eta_2 f_0\right)\left(\sigma f_0 + g_0\right).
    \end{align}
\end{subequations}
The coefficients $\hat{a}_4, \hat{a}_3$, $\hat{a}_1$ and $\hat{a}_0$ are nonnegative, including the `classic' Holling-type II case for which $f_0 = 0$ and $\hat{d}_g = g_0$. As a consequence, condition \eqref{eq:HollingII_quartic_cond} can only hold for a positive $\hat{T}$-interval if the fourth order polynomial $\hat{a}_4\hat{T}^4 + \hat{a}_3 \hat{T}^3 + \hat{a}_2 \hat{T}^2 + \hat{a}_1 \hat{T} + \hat{a}_0$ has two extrema for positive $\hat{T}$, i.e. two positive $\hat{T}$-values for which
\begin{displaymath}
    12 \hat{a}_4\hat{T}^2 + 6 \hat{a}_3 \hat{T} + 12 \hat{a}_2 = 0.
\end{displaymath}
Since in particular $\hat{a}_3>0$, we see that this quadratic equation cannot have two positive solutions. Hence, there is no positive $T$-interval for which $P_\text{Holling}'(T) < 0$, from which it follows that front type patterns cannot exist for the Holling-type II choice of interaction functions $f$ and $g$ \eqref{eq:fg_choices_HollingII_rescaled}.\\

The analysis of the previous subsections is summarised in the following Result:
\begin{res}\label{res:patterns_generalmodel}
    Let $f,g: [0,\infty) \to [0,\infty)$ be chosen as power functions \eqref{eq:fg_choices_powerfct}, Holling-type II functions \eqref{eq:fg_choices_HollingII_rescaled} or saturation functions \eqref{eq:fg_choices_sattrans}.
    \begin{itemize}
     \item For power functions, the geometry of the critical manifold may be such that front-type patterns can exist in model \eqref{eq:model_class}.
     \item For Holling-type II functions, model \eqref{eq:model_class} does not support front-type patterns.
     \item Front-type patterns may exist in model \eqref{eq:model_class} for saturation transition functions, the specific system of section \ref{sec:model_specific} being an example.
    \end{itemize}
\end{res}

\section{Conclusion}


In this work, we have investigated the existence of far-from-equilibrium stationary patterns in a class of vegetation--autotoxicity models with cross-diffusion using geometric singular perturbation theory. For the specific model \eqref{eq:model_specific}, we have shown the existence of stationary, periodic, front-type far-from-equilibrium patterns, whose singular structure is described in Proposition \ref{prop:singsol} and whose persistence for $0<\varepsilon \ll 1$ is stated in Theorem \ref{thm:po_existence}. For the more general model class \eqref{eq:model_class}, we have identified the properties of the transition rate functions $f(T)$ and $g(T)$ that allow the same geometric construction to be carried out, which are summarised in Result \ref{res:patterns_general_necessaryconditions}. For three biologically relevant choices for the transition rate functions (saturation type, power functions, and Holling-type II), we have investigated whether front-type patterns can exist: Result \ref{res:patterns_generalmodel} summarises the outcome of this investigation.

Our existence analysis highlights the central role of the geometry of the critical manifold in determining whether front-type patterns can exist -- in particular, whether the fast subsystem admits the heteroclinic connection required for the construction of front-type patterns. We note that, rather than being a feature of the specific cross-diffusion model presented in~\cite{GIS_2026}, these patterns originate from specific functional properties of the nonlinearities. In particular, the relation between the cross-diffusion parameter $\sigma$ and the nonlinearity parameters $\eta_{1,2}$ as presented in Result \ref{res:patterns_general_necessaryconditions} is interesting for applications, since it is a clear and easy condition to check.

From a methodological perspective, this work represents --to our knowledge-- the first application of geometric singular perturbation theory to reaction--cross-diffusion systems. While GSPT has become a standard tool for the constructive analysis of far-from-equilibrium patterns in classical reaction--diffusion models, extending these techniques to systems with cross-diffusion presents additional challenges. In particular, the algebraic constraint introduced by the cross-diffusion term requires more careful analysis. We expect that the approach developed here can serve as a starting point for the analysis of patterns in other cross-diffusion models arising in ecology and other application areas.

Our results suggest several directions for future research. It would be interesting to investigate the stability of the resulting periodic patterns and to use numerical continuation to investigate the shape and existence of patterns beyond the realm of (asymptotically) small $\varepsilon$. In addition, the parameter ranges predicted by our analysis can serve as input for more extensive numerical simulations of the PDE system. In terms of biological relevance, the functional properties of the transition rates $f$ and $g$ that either allow for or prevent front-type patterns to occur can be linked to the underlying biological processes. Furthermore, our work is a first step towards understanding structural properties of more general classes of systems containing cross-diffusion operators, which can be understood in more detail through the structure of the associated critical manifold(s). In addition, it would be interesting to investigate to what extent our analytical predictions hold for front-type patterns in two spatial dimensions. 

%

\section*{Acknowledgements}
AI and CS are members of the INdAM-GNFM Group. AI was partially supported by PRIN 2022 PNRR P2022WC2ZZ ``A multidisciplinary approach to evaluate ecosystems resilience under climate change''. CS has received funding from the Programma Giovani Ricercatori ``Rita Levi Montalcini'' 2021.


\bibliography{Crossdiff_references}
\bibliographystyle{test4}

\end{document}